\newcommand{\Real}{\mathbb{R}}
\newcommand{\TD}{\Gamma_{\rm{D}}}
\newcommand{\TN}{\Gamma_{\rm{N}}}
\newcommand{\TC}{\Gamma_{\rm{C}}}
\DeclareMathOperator{\Div}{div}
\DeclareMathOperator{\Span}{span}
\DeclarePairedDelimiter{\CurlyBrackets}{\{}{\}}
\newtheorem{theorem}{Theorem}[section]
\newtheorem{lemma}[theorem]{Lemma}
\newtheorem{corollary}[theorem]{Corollary}
\newtheorem{proposition}[theorem]{Proposition}
\theoremstyle{definition}
\newtheorem{definition}{Definition}[section]
\theoremstyle{remark}
\newtheorem*{remark}{Remark}
\definecolor{MyColor1}{HTML}{CECE5A}
\definecolor{MyColor2}{HTML}{C51605}
\definecolor{MyColor3}{HTML}{B7B7B7}
\definecolor{MyColor4}{HTML}{FFE17B}
\newcolumntype{L}[1]{>{\raggedright\let\newline\\\arraybackslash\hspace{0pt}}m{#1}}
\newcolumntype{C}[1]{>{\centering\let\newline\\\arraybackslash\hspace{0pt}}m{#1}}
\newcolumntype{R}[1]{>{\raggedleft\let\newline\\\arraybackslash\hspace{0pt}}m{#1}}
\providecommand{\keywords}[1]
{
	\small	
	\textbf{\textit{Keywords---}} #1
}
\title{Multiscale modeling for contact problem with high-contrast heterogeneous coefficients with primary-dual formulation}
\author[1]{Zishang Li}
\author[1]{Changqing Ye}
\author[1]{Eric T.~Chung\thanks{Corresponding author:\href{mailto:zsli@math.cuhk.edu.hk}{tschung@math.cuhk.edu.hk}}}
\affil[1]{{Department of Mathematics, The Chinese University of Hong Kong, Shatin, Hong Kong SAR}}
\begin{document}
\maketitle

\begin{abstract}
	In this paper, we propose a novel iterative multiscale framework for solving high-contrast contact problems of Signorini type.
The method integrates the constrained energy minimizing generalized multiscale finite element method (CEM-GMsFEM) with a primal-dual active set strategy derived from semismooth Newton methods.
First, local spectral problems are employed to construct an auxiliary multiscale space, from which energy minimizing multiscale basis functions are derived on oversampled domains, yielding a contrast-robust reduced-order approximation of the underlying partial differential equation.
The multiscale bases are updated iteratively, but only at contact boundary, during the active set evolution process.
Rigorous analysis is provided to establish error estimates and finite step convergence of the iterative scheme. Numerical experiments on heterogeneous media with high-contrast coefficients demonstrate that the proposed approach is both robust and efficient in capturing fine-scale features near contact boundaries.

\end{abstract}

\keywords{multiscale finite element methods, high contrast problems, non-smooth boundary condition}

\section{Introduction}
Composite materials have received extensive attention in modern engineering applications due to their superior strength-to-weight ratios and enhanced performance characteristics.
However, the analysis of composite structures is inherently challenging because physical problems related to these materials exhibit multiscale features and high-contrast material properties.
In many cases, the direct use of conventional finite element methods (FEM) demands extremely fine discretizations, leading to large-scale systems and prohibitive computational costs.

To overcome these difficulties, various multiscale methods have been introduced. In particular, methods such as the multiscale finite element method (MsFEM) \cite{Chen2003,Efendiev2009,Hou1997,Hou1999}, heterogeneous multiscale method (HMM) \cite{Abdulle2012,Ming2005,Weinan2007} and generalized multiscale finite element methods (GMsFEM) \cite{Efendiev2013,Chung2014,Chung2016} have demonstrated the ability to efficiently capture fine-scale heterogeneities by embedding small-scale information into a reduced-order subspace.
The constrained energy minimizing GMsFEM (CEM-GMsFEM) further enhances this strategy by systematically constructing multiscale basis functions via local energy minimization.
This approach guarantees that the constructed basis exhibits exponential decay away from local support regions and provides robust accuracy even in high-contrast settings.

In practical engineering applications, additional complexities arise when contact phenomena occur.
In general, contact problems are formulated as variational inequalities that originate from geometric constraints along the contact interface and are inherently nonsmooth in nature \cite{Duvaut1976}.
For frictionless contact problems---those that can be recast as convex optimization problems---the mathematical framework is well established \cite{Lions1967}; however, for frictional contact problems the theoretical understanding remains less complete \cite{Eck2005,Ballard2024}.

From a computational perspective, it is crucial to carefully address both the discretization of the governing equations and the development of efficient numerical algorithms.
Two primary discretization strategies are commonly employed. First, Galerkin methods impose the constraints directly in their primal formulation \cite{Hild2012,Drouet2015}.
Second, saddle-point methods explicitly account for the coupling between the primary variable and its associated Lagrange multiplier \cite{Wohlmuth2000,Hueeber2008,Wohlmuth2011}.
Even in the case of Galerkin formulations, designing robust solvers for the resulting systems of inequalities remains challenging.
Widely used techniques include the penalty method, which incorporates a penalty term into the objective function to weakly enforce the constraints \cite{Wu2022}, and methods that exploit the problem's primal-dual structure to yield a saddle-point formulation \cite{Wriggers2006}.
Moreover, when the underlying model exhibits multiscale features, the numerical treatment of contact problems becomes even more complicated; see \cite{Ye2021a,Ye2023a} for recent homogenization results.
Therefore, special attention must be paid to the design of multiscale computational methods for these nonsmooth boundary value problems.

In this work, we propose a novel iterative multiscale framework that integrates the CEM-GMsFEM with a primal-dual approach for solving high-contrast contact problems.
The proposed framework first constructs an auxiliary space via local spectral problems and subsequently builds multiscale basis functions by solving localized energy minimization problems on oversampled domains.
These basis functions are then employed to construct a contrast-robust approximation of the solution to the underlying PDE, which in turn defines a reduced-order space for the contact problem.
Owing to the nonlinear nature of the problem, the multiscale bases are updated only at the contact boundary by appropriately restricting certain degrees of freedom.
Compared with our previous work \cite{Li2025}, the current approach avoids the use of penalty methods, thereby circumventing the accuracy deterioration often associated with the inappropriate choice of penalty parameters.

The remainder of the paper is organized as follows.
Section~2 formulates the model problem and reviews the underlying variational inequality and its constrained minimization formulation.
In Section~3, we describe the construction of the CEM-GMsFEM basis functions and detail the primary-dual active set method used for solving the resulting variational inequality problem.
Section~4 presents the theoretical analysis, including error estimates and convergence properties, which elucidate the advantages of the proposed method.
Finally, Section~5 demonstrates the performance of our approach through several numerical experiments involving heterogeneous media and contact boundaries.

\section{Preliminaries}\label{sec:pre}
\subsection{Model problem}
We will consider the contact problem of Signorini type for a second-order elliptic partial differential equation. We denote a Lipschitz domain by $\Omega \subset \R^d$ ($d=2\ \text{or}\ 3$), and $\kappa\in L^\infty\left(\Omega ;\ \R^{d \times d} \right) $ a matrix-valued function defined on $ \Omega $ represents a heterogeneous permeability field with high contrast:
\begin{equation}\label{eq:strong}
  \left\{
  \begin{aligned}
     & - \Div\left( {\kappa \nabla u} \right) = f                                            &  & \text { in } \Omega, \\
     & u=0                                                                                   &  & \text { on } \TD,    \\
     & \kappa\nabla u\cdot\bm{n}=p                                                           &  & \text { on } \TN,    \\
     & u \leqslant0, \ \kappa\nabla u\cdot\bm{n} \leqslant0,\ u(\kappa\nabla u\cdot\bm{n})=0 &  & \text { on } \TC,    \\
  \end{aligned}
  \right.
\end{equation}
where $\bm{n}$ is the outward unit normal to $\partial\Omega$, $\TD$, $\TN$ and $\TC$ are three nonempty disjointed parts of $\partial\Omega$. We assume that the source term $f \in L^2(\Omega ) $, and there exist two positive constants $ \kappa' $ and $ \kappa'' $ such that  $0<\kappa' \leqslant \kappa(\bm{x}) \leqslant \kappa''<\infty$ for almost all $ \bm{x}\in\Omega $ and the contrast ratio $ \kappa'/\kappa'' $ could be very large.

To solve this problem, it is appropriate to use a functional framework that involves a subset of the Sobolev space $H^1(\Omega)$ defined as
\[ V \coloneqq\left\{v \in H^1(\Omega): \  v=0\text{ on } \TD \right\}. \]
The contact condition is then explicitly incorporated in the following closed convex set
\[ K\coloneqq\left\{v \in V: \  v\leqslant0 \text{ on } \TC\right\}.\]
By the primal variational principle for the Signorini problem, the exact solution of the above contact problem (\cref{eq:strong}) is characterized by the variational inequality: find $u \in K$ such that
\begin{equation}\label{eq:var inequ}
  a(u, v-u) \geqslant L(v-u), \  \forall v \in K,
\end{equation}
where
\[  a(u, v) =\int_{\Omega}\kappa \nabla u \cdot\nabla v \di \bm{x} \ \text{and}\ L(v) =\int_{\Omega} f v \di \bm{x}+\int_{\Gamma_N} p v \di \sigma. \]
We introduce the notation for the energy norm $ {\left\| v \right\|_a} \coloneqq \sqrt {a(v,v)}$ on $ \Omega $. For a subdomain $ \omega \subset \Omega $ , we also introduce the norm $ {\left\| v \right\|_{a(\omega)}} \coloneqq \sqrt{\int_{\omega}\kappa \nabla u \cdot\nabla v \di \bm{x}}. $

By Theorem 3.9 (see \cite{Kikuchi1988}), the weak problem (\cref{eq:var inequ}) is well-posed and the unique solution of this variational inequality exists. Moreover, this solution is also the minimizer of the constrained minimization problem: given the functional $F: V \rightarrow \R$ is of the form
\[F(v)=\frac{1}{2} a(v, v)-L(v),\]
find $u \in K$ such that
\begin{equation}\label{eq:con min}
  F(u) = \inf\limits_{v \in K} F(v).
\end{equation}
It can be considered as the following constrained minimization problem:
\begin{equation}\label{eq:min_subject}
  \left\{
  \begin{aligned}
     & \min \frac{1}{2} a(u,u)-L(u),                          \\
     & \text{subject to } u\in V,\ u\leqslant0\text{ on }\TC. \\
  \end{aligned}
  \right.
\end{equation}
While the penalty method approximates a constrained problem by adding a term that penalizes constraint violations to the objective function, the Lagrange multiplier method exactly encodes the constraints into the optimality conditions through the introduction of auxiliary variables (the multipliers). The key superiority of the Lagrange multiplier method lies in its ability to precisely satisfy the constraints at the solution without the numerical ill-conditioning associated with driving a penalty parameter to infinity. For constrained optimization problems, the method of Lagrange multipliers stands as a fundamental strategy for constrained optimization due to its profound conceptual and practical advantages. Its primary superiority lies in transforming a constrained minimization problem into a higher-dimensional unconstrained problem via the Lagrangian function. This transformation not only simplifies the problem but also provides a systematic approach to handle constraints, making it particularly effective for problems with multiple or complex constraints. For inequality-constrained optimization problems, this method can derive necessary conditions for optimality through the Karush-Kuhn-Tucker (KKT) conditions, thereby enhancing its practicality and providing a clear framework for identifying optimal solutions within constrained environments. When the objective function and constraints are convex, the KKT conditions also serve as sufficient conditions for finding the optimal solution. Thus, the method of Lagrange multipliers is a versatile and powerful tool that significantly contributes to the field of optimization by facilitating the resolution of constrained problems with elegance and efficiency. Associated to \cref{eq:min_subject} we define the Lagrangian $\mathcal{L}: V \times H^{-\frac{1}{2}}(\TC)\to \Real $ by
\begin{equation}\label{eq:}
  \mathcal{L}(u, \lambda) = \frac{1}{2} a(u, u)-L(u)+\langle {\lambda,u}\rangle _{\TC} ,
\end{equation}
where and $ \langle {\cdot,\cdot}\rangle _{\TC} $  denotes the duality pairing between $ H^{\frac{1}{2}}(\TC) $ and $ H^{-\frac{1}{2}}(\TC) $. \cref{eq:min_subject} admits a unique solution denoted by $ u^* \in V $. There exists a Lagrange multiplier $ \lambda^* \in H^{-\frac{1}{2}}(\TC) $ which renders $ \mathcal{L} $ stationary at $ (u^*,\lambda^*) $, i.e., $\exists \ (u^*,\lambda^*) $ such that
\begin{equation}\label{eq:stationary}
  \left\{
  \begin{aligned}
     & \int_{\Omega}\kappa \nabla u \cdot\nabla v \di \bm{x}+ \langle {\lambda,v}\rangle _{\TC}=\int_{\Omega} f v \di \bm{x} +\int_{\Gamma_N} p v \di \sigma  \  &  & \text{for all } v\in V,                        \\
     & \langle {\mu-\lambda,u}\rangle_{\TC}\geqslant0,\ \mu\geqslant0                                                                                            &  & \text{for all } \mu \in H^{-\frac{1}{2}}(\TC), \\
  \end{aligned}
  \right.
\end{equation}
Let $\mu=0$ and $\mu=2\lambda$ lead to the KKT conditions for this constrained minimization problem:
\begin{equation}\label{eq:kkt}
  \left\{
  \begin{aligned}
     & \int_{\Omega}\kappa \nabla u \cdot\nabla v \di \bm{x}+ \langle {\lambda,v}\rangle _{\TC}=\int_{\Omega} f v \di \bm{x} +\int_{\Gamma_N} p v \di \sigma  \  &  & \text{for all } v\in V,                        \\
     & \langle {\mu,u}\rangle_{\TC}\leqslant0,\ \mu \geqslant 0,\ \langle {\lambda,u}\rangle _{\TC}=0 \                                                          &  & \text{for all } \mu \in H^{-\frac{1}{2}}(\TC), \\
  \end{aligned}
  \right.
\end{equation}
which can formally be expressed as
\begin{equation}\label{eq:formally}
  \left\{
  \begin{aligned}
     & -\Div\left( {\kappa \nabla u} \right) = f,       &  & \text { in } \Omega, \\
     & u=0,                                             &  & \text { on } \TD,    \\
     & \kappa\nabla u\cdot\bm{n}=p,                     &  & \text { on } \TN,    \\
     & \kappa\nabla u\cdot\bm{n}=-\lambda,              &  & \text { on } \TC.    \\
     & u\leqslant0,\ \lambda \geqslant 0,\ \lambda u=0, &  &                      \\
  \end{aligned}
  \right.
\end{equation}

\subsection{Semismooth Newton method}\label{subsec:semi-new}
We denote
\begin{equation}\label{eq:F(lambda)}
  F(\lambda)=\lambda-\max(0,\lambda+cu|_{\TC}).
\end{equation}
Note that the complementarity system given by the last line in \cref{eq:formally} can equivalently be expressed as $ F(\lambda)=0 $ for each $ c>0 $, where max denotes the pointwise a.e.maximum operation. We now briefly recall the facts on semi-smooth Newton methods and find the solution to $ F(\lambda)=0 $.

Let $ X $ and $ Z $ be Banach spaces and let $ F: D \subset X \to Z $ be a nonlinear mapping with open domain $ D $.
\begin{theorem}
  Let $ F: D\to \Real $. Then for $ x\in D$, the following statements are equivalent:\\
  (a) $ F $ is semismooth at $ x $.\\
  (b) $ F $ is locally Lipschitz continuous at $ x $, $  F'(x;\cdot) $ exists, and for any $ G\in \partial F(x + d) $,
  \[\norm{Gd-F'(x, d)} = \bigO(\norm{d})\text{ as } d\to 0 .\]
  (c) $ F $ is locally Lipschitz continuous at $ x $, $  F'(x;\cdot) $ exists, and for any $ G\in \partial F(x + d) $,
  \[\norm{F(x+d)-F(x)-Gd}=\bigO(\norm{d}) \text{ as } d\to 0 .\]
\end{theorem}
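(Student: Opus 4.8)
The plan is to prove the cycle $(a)\Rightarrow(b)\Rightarrow(c)\Rightarrow(a)$. Three classical facts about a locally Lipschitz $F\colon D\to\Real$ near $x$ do the work: the directional map $d\mapsto F'(x;d)$ is positively homogeneous and Lipschitz continuous; the sets $\partial F(y)$ are uniformly bounded for $y$ near $x$; and Lebourg's mean value theorem, which gives $F(x+d)-F(x)=\xi_d\,d$ for some $\xi_d\in\partial F(x+s_d d)$ with $s_d\in(0,1)$. Throughout, the estimates in (b) and (c) are read in the little-$o$ sense, uniformly over $G\in\partial F(x+d)$ as $d\to0$, which is the only reading under which they carry content.

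For $(a)\Rightarrow(b)$, semismoothness already entails that $F$ is locally Lipschitz at $x$ and that $F'(x;\cdot)$ exists, so only the asymptotic bound is at stake, and I would argue it by contradiction. If it failed there would be $\varepsilon>0$ and $d_k\to0$, $d_k\neq0$, with $G_k\in\partial F(x+d_k)$ and $\norm{G_k d_k-F'(x;d_k)}\ge\varepsilon\norm{d_k}$. Writing $d_k=t_k e_k$ with $t_k=\norm{d_k}\downarrow0$ and $\norm{e_k}=1$, and passing to a subsequence with $e_k\to e$, I would combine $F'(x;d_k)=t_k F'(x;e_k)$ (homogeneity), $F'(x;e_k)\to F'(x;e)$ (continuity of $F'(x;\cdot)$), and $G_k e_k\to F'(x;e)$ (the defining convergence property of semismoothness along $x+t_k e_k$) to obtain $\norm{G_k e_k-F'(x;e_k)}\to0$, contradicting $\norm{G_k e_k-F'(x;e_k)}\ge\varepsilon$.

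For $(b)\Rightarrow(c)$, Lebourg's theorem gives $F(x+d)-F(x)-Gd=\bigl(\xi_d\,d-F'(x;d)\bigr)-\bigl(Gd-F'(x;d)\bigr)$ with $\xi_d\in\partial F(x+s_d d)$; the second bracket is $o(\norm d)$ by (b), and applying (b) at the point $x+s_d d$ with increment $s_d d$ and dividing by $s_d\in(0,1)$ turns $o(\norm{s_d d})/s_d$ into $o(\norm d)$ (since $s_d d\to0$ whenever $d\to0$ and $F'(x;\cdot)$ is homogeneous), so the first bracket is $o(\norm d)$ as well; summing proves (c), while its Lipschitz and directional-differentiability clauses are inherited verbatim from (b). For $(c)\Rightarrow(a)$, given $t_k\downarrow0$, $d_k\to d$, $G_k\in\partial F(x+t_k d_k)$, applying (c) with increment $t_k d_k$ yields $G_k d_k=\frac{F(x+t_k d_k)-F(x)}{t_k}+o(1)$; splitting this difference quotient through $F(x+t_k d)$, the cross term is bounded by $L\norm{d_k-d}\to0$ (with $L$ a local Lipschitz constant) and the remaining quotient tends to $F'(x;d)$, so $G_k d_k\to F'(x;d)$ along every such sequence, which together with the assumed directional differentiability is exactly semismoothness of $F$ at $x$.

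The main obstacle is the compactness extraction $e_k\to e$ on the unit sphere inside $(a)\Rightarrow(b)$: it is automatic in finite dimensions (the relevant setting once the multiplier $\lambda$ is discretized), but in a general Banach space $X$ it must be replaced by an argument exploiting boundedness and weak-$*$ closedness of $\partial F(\cdot)$ together with upper semicontinuity of Clarke's generalized directional derivative, or by restricting to a fixed finite-dimensional cone of directions. I would therefore fix the ambient setting explicitly and carry out Step~1 accordingly; the remaining two implications are routine once Lebourg's mean value theorem is available.
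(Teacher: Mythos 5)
The paper itself offers no proof of this equivalence: it is quoted as background (the surrounding text only defers the definitions to Hinterm\"uller's notes), so there is nothing internal to compare your argument against, and I assess it on its own merits. Your proof is essentially the standard Mifflin/Qi--Sun argument and is sound in the setting where the theorem can be expected to hold. Two of your own caveats are in fact essential rather than cosmetic. First, as printed with $\bigO(\norm{d})$ the conditions (b) and (c) are vacuously true for any locally Lipschitz, directionally differentiable $F$ (both $\norm{Gd}$ and $\norm{F'(x;d)}$ are already bounded by $L\norm{d}$), so the stated equivalence with semismoothness would be false; your decision to read the bounds as $o(\norm{d})$ is the only defensible one. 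Second, the compactness extraction $e_k\to e$ on the unit sphere in $(a)\Rightarrow(b)$ genuinely requires a finite-dimensional domain, whereas the paper's surrounding text places $D$ inside a Banach space $X$; you are right that the ambient space must be fixed before that step can be run. The one point you pass over too quickly is that the whole cycle hinges on which definition of ``semismooth'' is in force: you implicitly adopt the sequential (Mifflin) characterization in both $(a)\Rightarrow(b)$ and $(c)\Rightarrow(a)$, while the paper never states a definition at all; if the cited source takes (c) itself as the definition, two of the three implications collapse to tautologies. The remaining mechanics --- Lebourg's mean value theorem for $(b)\Rightarrow(c)$, with the division by $s_d$ legitimate because $s_d d\to 0$ forces the uniform remainder $o(\norm{s_d d})$ to have the form $s_d\norm{d}\,\varepsilon(d)$ with $\varepsilon(d)\to 0$, and the difference-quotient splitting through $F(x+t_k d)$ in $(c)\Rightarrow(a)$ --- are correct.
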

Here $ F'(x; d) $ and $ \partial F(x+d) $ are the directional derivative and the collection of the generalized directional derivative of $ F $ at $ x $ in direction $ d $ respectively. The definition can be found in \cite{Hintermueller2010}. Then a generalized derivative is defined for the semismooth Newton procedure.

\begin{definition}
  The mapping $ F:D \subset X \to Z $ is Newton differentiable on the open set $ U \subset D $, if there exists a family of mappings $ G  : U \to L(X,Z)$ such that
  \[\lim\limits_{d \to 0}\frac{1}{\norm{d}} \norm{F(x+d)-F(x)-G(x+d)d} =0. \]
  for every $ x \in U $. The operator $ G $ is referred to as Newton's derivative of $ F $.
\end{definition}

\begin{theorem}\label{thm:Newton}
  Suppose that $ x^*\in D $ is a solution to $ F(x)=0 $ and that $F$ is Newton-differentiable in an open neighborhood $ U $ containing $ x^* $ and that $ \left\{{\left\lVert {G(x)^{-1}}\right\rVert  : x\in U }\right\}$ is bounded. Then the Newton-iteration $ x_{k+1} = x_k-G(x_k)^{-1}F(x_k) $ converges superlinearly to $ x^* $ provided that $ \left\lVert {x_0-x^*} \right\rVert$ is sufficiently small.
\end{theorem}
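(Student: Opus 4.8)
The plan is to run the classical local-convergence argument for Newton-type iterations, with the Newton-differentiability hypothesis playing the role usually played by Fréchet differentiability. Write $M \coloneqq \sup_{x \in U}\norm{G(x)^{-1}} < \infty$ for the uniform bound supplied by the hypothesis. The engine of the proof is the defining estimate of Newton differentiability \emph{applied at the solution} $x^*$: since $F(x^*) = 0$, for every $\varepsilon > 0$ there is a radius $\delta_\varepsilon > 0$ with $B(x^*,\delta_\varepsilon) \subset U$ such that
\[
  \norm{F(x) - G(x)(x - x^*)} \le \varepsilon\,\norm{x - x^*}
  \qquad\text{whenever } \norm{x - x^*} \le \delta_\varepsilon .
\]

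First I would show the iteration is well defined and contracts once it is started close enough. Fix $\varepsilon_0$ with $M\varepsilon_0 \le \tfrac12$ and set $\delta \coloneqq \delta_{\varepsilon_0}$, assuming in addition $\norm{x_0 - x^*} \le \delta$. Arguing by induction: if $x_k \in B(x^*,\delta) \subset U$, then $G(x_k)$ is invertible with $\norm{G(x_k)^{-1}} \le M$, so $x_{k+1}$ is defined, and, inserting $F(x^*)=0$,
\[
  x_{k+1} - x^* = -\,G(x_k)^{-1}\bigl(F(x_k) - G(x_k)(x_k - x^*)\bigr),
\]
whence $\norm{x_{k+1} - x^*} \le M\varepsilon_0\,\norm{x_k - x^*} \le \tfrac12\,\norm{x_k - x^*} \le \delta$. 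Thus $x_k \in B(x^*,\delta)$ for all $k$ and $\norm{x_k - x^*} \le 2^{-k}\,\norm{x_0 - x^*} \to 0$.

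Then I would upgrade linear convergence to superlinear convergence by re-running the same identity with a shrinking tolerance. Given an arbitrary $\eta \in (0,1)$, put $\varepsilon \coloneqq \eta/M$ and take the associated $\delta_\varepsilon$. By the convergence just established there is an $N$ with $\norm{x_k - x^*} \le \delta_\varepsilon$ for all $k \ge N$; for such $k$ the displayed identity together with the Newton-differentiability estimate yields $\norm{x_{k+1} - x^*} \le M\varepsilon\,\norm{x_k - x^*} = \eta\,\norm{x_k - x^*}$. Since $\eta$ was arbitrary, $\norm{x_{k+1} - x^*}/\norm{x_k - x^*} \to 0$, i.e. the convergence is superlinear. (If some iterate equals $x^*$, the iteration is stationary and there is nothing to prove.)

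The argument is essentially bookkeeping, and I do not expect a genuine obstacle; the two points needing care are that the uniform bound on $\norm{G(x)^{-1}}$ over $U$ is exactly what keeps the induction inside $U$, and that Newton differentiability must be invoked at the point $x^*$ rather than at generic iterates, so that $F(x^*) = 0$ can be substituted to produce the clean residual identity above. The only subtlety in ordering is to choose the tolerances in two stages — a fixed $\varepsilon_0$ (with $M\varepsilon_0 \le \tfrac12$) to drive the iterates into the ball and keep them there, and then an arbitrarily small $\varepsilon$ to extract the superlinear rate.
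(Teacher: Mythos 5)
Your proposal is correct and follows essentially the same route as the paper, which does not reproduce a proof but defers to \cite{Hintermueller2002}, where exactly this two-stage argument appears (a fixed tolerance $\varepsilon_0$ with $M\varepsilon_0\le\tfrac12$ to trap the iterates in a ball around $x^*$, then an arbitrarily small tolerance to extract the superlinear rate). You also correctly exploit the defining feature of Newton differentiability --- that the derivative is evaluated at the perturbed point $x^*+d=x_k$ rather than at $x^*$ --- which is precisely what makes the residual identity $x_{k+1}-x^*=-G(x_k)^{-1}\bigl(F(x_k)-G(x_k)(x_k-x^*)\bigr)$ usable.
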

Let us consider Newton-differentiability of the max-operator. For this purpose $ X $ denotes a function space of real-valued functions on $ \Omega \subset \Real^n $  and $ \max(0,\lambda) $  is the pointwise max-operation. For $ \delta \in \Real $ we introduce candidates for the generalized derivative of the form
\begin{equation}\label{eq:derivative of max}
  G_\delta(\lambda)(x) = \left\{
  \begin{aligned}
     & 0,      &  & \text{if } \lambda(x) < 0, \\
     & \delta, &  & \text{if } \lambda(x) = 0, \\
     & 1,      &  & \text{if } \lambda(x) > 0. \\
  \end{aligned}
  \right.
\end{equation}
\begin{proposition}\label{prop:max-operator}
  The mapping $\max(0,\cdot): L^q(\Omega) \to L^p(\Omega) $  with $ 1\leqslant p<q<\infty $  is Newton differentiable on $ L^q(\Omega) $ and $ G_\delta(\lambda) $ is a Newton's derivative.
\end{proposition}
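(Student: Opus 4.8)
The plan is to verify directly the limit in the definition of Newton differentiability, with the family $G=G_\delta$. Fix $\lambda\in L^q(\Omega)$ and write $h$ for the increment; the goal is to show
\[
\frac{1}{\norm{h}_{L^q(\Omega)}}\,\bigl\lVert\max(0,\lambda+h)-\max(0,\lambda)-G_\delta(\lambda+h)\,h\bigr\rVert_{L^p(\Omega)}\longrightarrow 0\qquad\text{as }\norm{h}_{L^q(\Omega)}\to 0 .
\]
First I would note that $G_\delta(\lambda+h)$ is multiplication by the bounded function $x\mapsto g_\delta(\lambda(x)+h(x))\in\{0,\delta,1\}$ (with $g_\delta$ as in \cref{eq:derivative of max}), so it is a bounded operator from $L^q(\Omega)$ into $L^p(\Omega)$ — here $|\Omega|<\infty$ is used — and the numerator is a well-defined element of $L^p(\Omega)$. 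Everything then reduces to a pointwise estimate of the residual
\[
r_h(x)\coloneqq\max(0,\lambda(x)+h(x))-\max(0,\lambda(x))-g_\delta(\lambda(x)+h(x))\,h(x) .
\]

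Next I would carry out the elementary case analysis on the signs of $\lambda(x)$ and $\lambda(x)+h(x)$. Whenever the two quantities have the same strict sign, or whenever $\lambda(x)=0$, a direct computation gives $r_h(x)=0$; in particular the value $\delta$ assigned on $\{\lambda=0\}$ never enters. The only nonzero contributions come from the ``sign--change set''
\[
D_h\coloneqq\bigl\{x:\lambda(x)>0,\ \lambda(x)+h(x)\leqslant 0\bigr\}\cup\bigl\{x:\lambda(x)<0,\ \lambda(x)+h(x)\geqslant 0\bigr\},
\]
and on $D_h$ one checks in each of the four remaining sub-cases that $|r_h(x)|\leqslant C_\delta\,|h(x)|$ with $C_\delta\coloneqq\max\{1,|\delta|,|\delta-1|\}$, and moreover that $|h(x)|\geqslant|\lambda(x)|>0$.

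I would then estimate $\norm{r_h}_{L^p(\Omega)}$ via Hölder's inequality on $D_h$ with conjugate exponents $q/p$ and $q/(q-p)$ — this is the point at which the strict inequality $p<q$ is needed — to obtain
\[
\norm{r_h}_{L^p(\Omega)}\leqslant C_\delta\Bigl(\int_{D_h}|h|^q\Bigr)^{1/q}|D_h|^{(q-p)/(pq)}\leqslant C_\delta\,\norm{h}_{L^q(\Omega)}\,|D_h|^{(q-p)/(pq)} .
\]
Consequently the quotient in question is dominated by $C_\delta\,|D_h|^{(q-p)/(pq)}$, and it remains only to prove that $|D_h|\to 0$ as $\norm{h}_{L^q(\Omega)}\to 0$.

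This last point is the main obstacle. Since $D_h\subset\{x:|h(x)|\geqslant|\lambda(x)|>0\}$, I would fix $\varepsilon>0$ and split: the part of $D_h$ on which $|\lambda|\geqslant\varepsilon$ is contained in $\{|h|\geqslant\varepsilon\}$, hence has measure at most $\varepsilon^{-q}\norm{h}_{L^q(\Omega)}^q$ by Chebyshev's inequality; the part on which $0<|\lambda|<\varepsilon$ does not depend on $h$ and its measure tends to $0$ as $\varepsilon\downarrow 0$, because $|\Omega|<\infty$ and $\{0<|\lambda|<\varepsilon\}\downarrow\emptyset$. Choosing $\varepsilon$ small first and then restricting $\norm{h}_{L^q(\Omega)}$ yields $|D_h|\to 0$, which completes the argument. (Alternatively, along any sequence $h_n\to0$ in $L^q(\Omega)$ one may pass to an a.e.\ convergent subsequence, observe that $\mathbbm{1}_{D_{h_n}}\to0$ a.e., and invoke dominated convergence; the Chebyshev splitting is preferable since it avoids passing to subsequences.)
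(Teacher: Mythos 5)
Your proof is correct. The paper itself gives no argument for this proposition — it only cites \cite{Hintermueller2002} — and your write-up is a faithful, self-contained reconstruction of the standard proof from that reference: the pointwise case analysis reducing the residual to the sign-change set $D_h$, the H\"older step where $p<q$ enters, and the Chebyshev splitting showing $|D_h|\to 0$ are exactly the ingredients of the known argument, with the finiteness of $|\Omega|$ correctly flagged where it is needed.
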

For the proofs of \cref{thm:Newton} and \cref{prop:max-operator} we refer to \cite{Hintermueller2002}. The following chain rule will be utilized to derive the Newton's derivative of $F(\lambda)$.
\begin{proposition}\label{prop:chain rule}
  Let $ F_2: Y \to X $ be an affine mapping with $ F_2 y = By + b $, $ B \in L(Y, X) $, $ b \in X $, and assume that $ F_1: D\subset X \to Z $  is Newton-differentiable on the open subset $ U\subset D$ with Newton's derivative $ G $. If $ F^{-1}_2(U) $ is nonempty then $ F = F_1 \circ  F_2 $  is Newton-differentiable on $ F^{-1}_2(U ) $ with Newton's derivative given by $ G(By + b)B \in L(Y, Z) $, for $ y\in F^{-1}_2(U )$.
\end{proposition}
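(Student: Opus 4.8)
The plan is to reduce Newton differentiability of the composition directly to that of $F_1$ by a change of variables, using that the inner map $F_2$ is affine and bounded.

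First I would verify that the statement is even well-posed as a "Newton differentiable on a set'' claim: since $F_2$ is affine, hence continuous, $F_2^{-1}(U)$ is open, and it is nonempty by hypothesis, so it makes sense to ask for Newton differentiability of $F$ there. I then fix the candidate family $\widetilde G\colon F_2^{-1}(U)\to L(Y,Z)$, $\widetilde G(y)\coloneqq G(By+b)B=G(F_2 y)B$, noting it is a bounded linear operator because $B\in L(Y,X)$ and $G(F_2 y)\in L(X,Z)$.

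Next, for fixed $y\in F_2^{-1}(U)$ I set $x\coloneqq F_2 y\in U$, and for an increment $d\in Y$ I put $h\coloneqq Bd\in X$. Since $U$ is open and $x\in U$, the point $x+h=F_2(y+d)$ lies in $U$ once $\norm{d}_Y$ is small enough, so $\widetilde G(y+d)=G(x+h)B$ is defined. Exploiting $F=F_1\circ F_2$ and $F_2(y+d)=x+h$ gives the exact identity
\[
 F(y+d)-F(y)-\widetilde G(y+d)\,d \;=\; F_1(x+h)-F_1(x)-G(x+h)\,h .
\]
Then I would invoke the Newton differentiability of $F_1$ at $x$: given $\varepsilon>0$ there is $\rho>0$ with $\norm{F_1(x+h)-F_1(x)-G(x+h)h}_Z\le\varepsilon\norm{h}_X$ whenever $\norm{h}_X<\rho$. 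Combining this with $\norm{h}_X=\norm{Bd}_X\le\norm{B}_{L(Y,X)}\norm{d}_Y$, I conclude that for $\norm{d}_Y<\rho/(1+\norm{B}_{L(Y,X)})$ the left-hand side above is bounded by $\varepsilon\,\norm{B}_{L(Y,X)}\norm{d}_Y$. Dividing by $\norm{d}_Y$ and letting $d\to0$, since $\varepsilon$ is arbitrary and $\norm{B}_{L(Y,X)}$ is a fixed constant, yields the defining limit, so $F$ is Newton differentiable on $F_2^{-1}(U)$ with Newton's derivative $\widetilde G$. The degenerate case $B=0$ is immediate, since then $F$ is constant and $\widetilde G\equiv0$.

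There is no serious obstacle in this argument; the only points requiring care will be the quantifier bookkeeping when passing from an $o(\norm{h}_X)$ estimate to an $o(\norm{d}_Y)$ estimate — which is exactly where boundedness of $B$ is used — and making sure the argument $x+h$ stays inside the domain $U$ of $G$, which is where openness of $U$ enters.
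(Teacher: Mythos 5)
Your proof is correct. The paper itself gives no proof of this proposition (it is quoted from the semismooth-Newton literature, cf.\ \cite{Hintermueller2002}), and your argument --- substituting $x=F_2y$, $h=Bd$, using the exact identity $F(y+d)-F(y)-G(F_2(y+d))Bd=F_1(x+h)-F_1(x)-G(x+h)h$, and converting the $o(\norm{h}_X)$ bound into an $o(\norm{d}_Y)$ bound via $\norm{h}_X\leqslant\norm{B}\,\norm{d}_Y$ --- is exactly the standard reference proof, with the relevant care taken on openness of $F_2^{-1}(U)$ and on the degenerate case $Bd=0$.
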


We can see that $ F(\lambda) $ is semismooth at $\lambda$. By the \cref{prop:chain rule}, the Newton's derivative $ G_F $  of $ F(\lambda) $ is
\begin{equation}\label{eq:derivative of F}
  G_F(\lambda) = 1-(1+cu'(\lambda))\chi_{\mathcal{A}_{\lambda}},
\end{equation}
where without loss of generality let $\delta=0$ and $ \chi_{\mathcal{A}_{\lambda}} $ be the characteristic function of the set
\[\mathcal{A}_{\lambda} = \left\{x\in\TC: \lambda+cu > 0\right\}.\]
To express the set in the Newton step $ \lambda_{k+1} = \lambda_k-G(\lambda_k)^{-1}F(\lambda_k) $, we denote the active set $ \mathcal{A}_k $ as
\[\mathcal{A}_k = \left\{x\in\TC: \lambda_k+cu_k > 0\right\},\]
and the inactive set $ \mathcal{I}_k $ as $ \mathcal{I}_k=\TC \setminus \mathcal{A}_k $. The Newton-iteration for $ F(\lambda) $ is derived as
\begin{equation*}
  \begin{aligned}
    (\lambda_{k+1} -\lambda_k)G_F(\lambda_k)                                                                                              & = -F(\lambda_k)                                \\
    \lambda_{k+1} -\lambda_k -(\lambda_{k+1} -\lambda_k)\chi_{\mathcal{A}_k}-c\chi_{\mathcal{A}_k}u'(\lambda_k)(\lambda_{k+1} -\lambda_k) & = -\lambda_k+ \max(0,\lambda_k+cu(\lambda_k)). \\
  \end{aligned}
\end{equation*}
Transposing terms and using the first-order Taylor series approximation of $u(\lambda_k)$, we obtain
$$ \lambda_{k+1} \approx (\lambda_{k+1} -\lambda_k)\chi_{\mathcal{A}_k}+\chi_{\mathcal{A}_k}(cu(\lambda_{k+1})-cu(\lambda_k))+\max(0,\lambda_k+cu(\lambda_k)) $$
Then combining like terms for the right-hand side,
\begin{equation}\label{eq:iteration-lambda}
  \lambda_{k+1} = (\lambda_{k+1} +cu(\lambda_{k+1}))\chi_{\mathcal{A}_k},
\end{equation}
Thus, when $ x\in \mathcal{A}_k$, $u(\lambda_{k+1})=0,$ and when $ x\in \mathcal{I}_k=\TC \setminus \mathcal{A}_k $, $ \lambda_{k+1}=0$. For simplify, we take $u_k=u(\lambda_k)$, then $ u_{k+1}, \lambda_{k+1}$ are the solution to
\begin{equation}\label{eq:formally iteration-1}
  \left\{
  \begin{aligned}
     & -\Div\left( {\kappa \nabla u} \right) = f, &  & \text { in } \Omega,        \\
     & u=0,                                       &  & \text { on } \TD,           \\
     & \kappa\nabla u\cdot\bm{n}=p,               &  & \text { on } \TN,           \\
     & -\kappa\nabla u\cdot\bm{n}=0,              &  & \text { on } \mathcal{I}_k, \\
     & \lambda = 0,                               &  & \text { on } \mathcal{I}_k, \\
     & u=0,                                       &  & \text { on } \mathcal{A}_k, \\
     & \lambda = -\kappa\nabla u\cdot\bm{n},      &  & \text { on } \mathcal{A}_k. \\
  \end{aligned}
  \right.
\end{equation}
We rewrite this Dirichlet-Neumann boundary value problem \cref{eq:formally iteration-1} in a variational form for designing computational methods: find a solution $u_{k+1} \in V_k \coloneqq\left\{v \in H^1(\Omega): \  v=0\text{ on } \TD \cup \mathcal{A}_{k}\right\}$ and $\lambda_{k+1} \in H^{-\frac{1}{2}}(\TC)$
such that for all $v \in  V_k$,
\begin{equation}\label{eq:variational iteration-1}
  \int_{\Omega}\kappa \nabla u_{k+1} \cdot\nabla v \di \bm{x} =\int_{\Omega} f v \di \bm{x}+\int_{\Gamma_N} p v \di \sigma \text{ and } \lambda_{k+1}=-\kappa\nabla u_{k+1}\cdot\bm{n} \text{ on } \TC.
\end{equation}
The semi-smooth Newton method can now be expressed as the following primal-dual active set strategy in \cref{algo}:
\begin{algorithm}
  \caption{Primal-dual active set algorithm} \label{algo}
  \hspace*{0.02in} {\bf Input:} 
  coefficient $\kappa$, source term $ f $, Neumann boundary term $ p $, constant $ c>0 $
  \begin{algorithmic}[1]
    \State Initialize $u_0$, $\lambda_0$. Set $k=0$.
    \State Set $\mathcal{A}_k=\left\{x\in\TC: \lambda_k+cu_k> 0\right\}$, $\mathcal{I}_k=\TC \setminus \mathcal{A}_k$.
    \State Solve \cref{eq:variational iteration-1} to obtain $u_{k+1}$ and $\lambda_{k+1}$.
    \State Set $\mathcal{A}_{k+1}=\left\{x\in\TC: \lambda_{k+1}+cu_{k+1} > 0\right\}$, $\mathcal{I}_{k+1}=\TC \setminus \mathcal{A}_{k+1}$.
    \State Stop if $\mathcal{A}_{k+1}=\mathcal{A}_k$ or set $k\coloneqq k+1$ and go to step 2.
  \end{algorithmic}
  \hspace*{0.02in} {\bf Output:} 
  iter number $(k+1)$, $u_{k+1}$, $\lambda_{k+1}$.
\end{algorithm}\\

\section{Numerical method}\label{sec:method}

Consider a conforming partition $\mathcal{T}^H$ of a domain $\Omega$ into $ N $ finite elements $K_i$, where $H$ denotes the coarse-mesh size. This is to be distinguished from another fine mesh $\mathcal{T}^h$ with the mesh size $ h $ and will be utilized to compute multiscale basis functions. The fine element is represented as $ \tau $. For each coarse element $K_i \in \mathcal{T}^H$ with $1 \leqslant i \leqslant N$, we define an oversampling domain $K^m_i(m \geqslant 1)$ as the domain obtained by augmenting the coarse element $K_i$ with an additional $m$ layers of neighboring coarse elements. A representation of the fine grid, coarse grid, and oversampling domain is provided in \cref{fig:grid}.
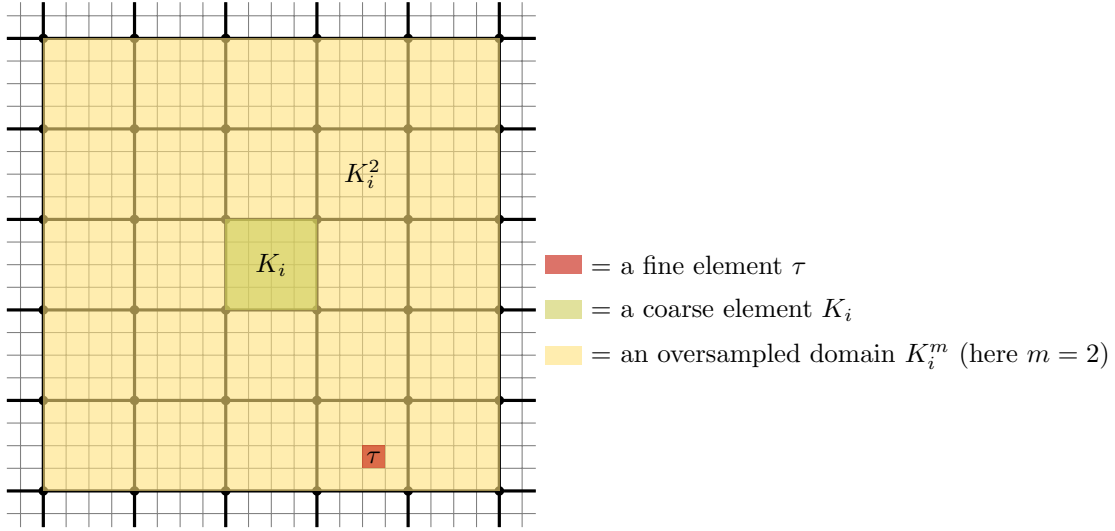
\begin{figure}[!ht]
  \centering
  \begin{tikzpicture}[scale=1.2]
    \draw[step=0.25, gray, thin] (-0.4, -0.4) grid (5.4, 5.4);
    \draw[step=1.0, black, very thick] (-0.4, -0.4) grid (5.4, 5.4);
    \foreach \x in {0,...,5}
    \foreach \y in {0,...,5}{
        \fill (1.0 * \x, 1.0 * \y) circle (1.5pt);
      }

    \fill[MyColor4, opacity=0.6] (0.0, 0.0) rectangle (5.0, 5.0);
    \fill[MyColor1, opacity=0.6] (2.0, 2.0) rectangle (3.0, 3.0);
    \fill[MyColor2, opacity=0.6] (3.5, 0.25) rectangle (3.75, 0.5);

    \node at (2.5, 2.5) {$K_i$};
    \node at (3.5, 3.5) {$K^2_i$};
    \node at (3.625, 0.375) {$\tau$};


    \fill[MyColor2, opacity=0.6] (5.5, 2.4) rectangle (5.9, 2.6);
    \node[right] at (5.9, 2.5) {=\ a fine element $\tau$};

    \fill[MyColor1, opacity=0.6] (5.5, 1.9) rectangle (5.9, 2.1);
    \node[right] at (5.9, 2.0) {=\ a coarse element $K_i$};

    \fill[MyColor4, opacity=0.6] (5.5, 1.4) rectangle (5.9, 1.6);
    \node[right] at (5.9, 1.5) {=\ an oversampled domain $K^m_i$ (here $ m=2 $)};
  \end{tikzpicture}
  \caption{Illustration of meshes, fine element, coarse element, and oversampling domain.}
  \label{fig:grid}
\end{figure}
For this quadrilateral mesh, there are 4 vertices contained in an element. We can construct a set of Lagrange bases $  \left\{ {\eta _i^1,\eta _i^2, \eta _i^3 ,\eta _i^4} \right\} $ of the coarse element $K_i \in \mathcal{T}^H$. Then we define $ \tilde \kappa (x) $ piecewise by
\begin{equation}\label{eq:kappa}
  \tilde \kappa (x)=3\sum\limits_{j = 1}^4 {\kappa (x)\nabla \eta _i^j}\cdot \nabla \eta _i^j
\end{equation}
in $ K_i $ which will be used in the following spectral problem.

The process of the construction of CEM-GMsFEM basis functions can be divided into two stages. The first stage involves the construction of the auxiliary space by solving a local spectral problem in each coarse element $ K_i$: find eigen-pairs $ \{ {\lambda _i^j,\phi _i^j} \} $ such that
\begin{equation}\label{eq:eigen_pro}
  {a_i}(\phi _i^j,v) = \lambda _i^j{s_i}(\phi _i^j,v), \  \forall v\in H^1(K_i).
\end{equation}
where
\[ a_i(u, v) =\int_{K_i}\kappa \nabla u \cdot\nabla v \di \bm{x}\ \text{and}\ s_i(u,v)= \int_{{K_i}} \tilde \kappa uv\di\bm{x}. \]
The bilinear form $s(w,v) \coloneqq \int_{{\Omega}} \tilde \kappa wv\di\bm{x} $, and note that $ s(w, v) $ can be well-defined on $ L^2(\Omega ) $. Similarly, we denote the norm
\[ {\norm{v}_s} \coloneqq \sqrt {s(v,v)}\ \text{and}\ {\norm{v}_{s(K_i)}} \coloneqq \sqrt{s_i(v,v)}. \]
Let the eigenvalues $ \{\lambda_i^j\}_{j=1}^\infty $ be arranged in an ascending order, we notice that $\lambda_i^1 = 0 $ always holds. We define the local auxiliary multiscale space $ V_i^{\mathup{aux}} $ by using the first $ l_i $ eigenfunctions
\[V_i^{\mathup{aux}} \coloneqq \Span \CurlyBrackets*{\phi_i^j:1 \leqslant j \leqslant l_i }.\]
It is easy to show that the orthogonal projection $\pi_i$ with respect to the inner product $ s(\cdot , \cdot )$ from $ L^2(K_i) $ to $ V_i^{\mathup{aux}} $ is
\[\pi_i(v)\coloneqq\sum\limits_{j = 1}^{l_i}\frac{s(\phi_i^j,v)}{s(\phi_i^j,\phi_i^j)}\phi_i^j.\]
The global auxiliary space $V^{\mathup{aux}} $ is defined by taking a direct sum of the local auxiliary spaces
$ V^{\mathup{aux}}=\mathop  \oplus_{i = i}^N V_i^{\mathup{aux}}, $
and the global projection is $ \pi \coloneqq \sum^N_{i=1} \pi_i $ accordingly. We can immediately derive the following basic estimates: for all $ v \in H^1(K_i),$
\begin{equation}\label{eq:basic_est_1}
  \norm{v-\pi_i v}^2_{s(K_i)}\leqslant\frac{\norm{v}^2_{a(K_i)}}{\lambda_i^{l_i+1}},
\end{equation}
\begin{equation}\label{eq:basic_est_2}
  \norm{\pi_i v}^2_{s(K_i)}=\norm{v}^2_{s(K_i)}- \norm{v-\pi_i v}^2_{s(K_i)}\leqslant\norm{v}^2_{s(K_i).}
\end{equation}
These estimates show that $ V^{\mathup{aux}} $ can approximate $ V$ satisfyingly and stably with the contrast ratio $ \kappa'/\kappa'' $. However, functions in $ V^{\mathup{aux}} $ may not be continuous in $\Omega$ , and thus $ V^{\mathup{aux}} $ cannot be used as a conforming finite element space. The essential thought in CEM-GMsFEMs is "extending" $\phi^j_i$ into $V$ by solving an energy minimization problem:
\begin{equation}\label{eq:ms_min}
  \psi_i^{j} = \mathup{argmin}\left\{ {a}(\psi , \psi) + s(\pi \psi  - \phi^j_i , \pi \psi  - \phi^j_i ):\ \psi \in V\right\},
\end{equation}
which is a relaxed version of the energy minimization problem \cite{Chung2018}
\[\psi_i^{j}= \mathup{argmin}\left\{ {a}(\psi,\psi): \psi \in V,\  \pi \psi=\phi^j_i \right\}.\]
Moreover, it can be proved that $\psi^j_i$ decays exponentially fast away from $K_i$\cite{Chung2018}\cite{Ye2023}, which implies that solving the energy minimization problems on oversampling domains $ K_i^m $ is reasonable in the next stage.

In the second stage, we denote the space
\[ V_k \coloneqq\left\{v \in H^1(\Omega): \  v=0\text{ on } \TD \cup \mathcal{A}_{k}\right\}\]
to add the restriction on active set and the space
\[ V_k(K_i^m)\coloneqq\left\{v \in H^1(K_i^m): \  v=0\text{ on } \TD\cap {\partial K_i^m}\ \text{or}\ \mathcal{A}_{k}\cap {\partial K_i^m}\ \text{or}\ \Omega\cap{\partial K_i^m} \right\}, \]
for each oversampling domain $ K_i^m $ with the restriction of $V_k$ in $ K_i^m $. Then the multiscale basis functions are defined by
\begin{equation}\label{eq:ms_min_oversampled}
  \psi_{i,k}^{j,m} = \mathup{argmin}\left\{ {a}(\psi , \psi) + s(\pi \psi  - \phi^j_i , \pi \psi  - \phi^j_i ):\ \psi \in V_k(K_i^m) \right\},
\end{equation}
We note that \cref{eq:ms_min_oversampled} is equivalent to: find $ \psi_{i,k}^{j,m} $ such that
\begin{equation}\label{eq:char_ms_min}
  {a}(\psi_{i,k}^{j,m} , v) + s(\pi \psi_{i,k}^{j,m} , \pi v) = s(\phi^j_i , \pi v),\  \forall v \in  V_k(K_i^m).
\end{equation}
Then the CEM-GMsFEM multiscale finite element space is defined by
\[ V_{k,\mathup{ms}}^m=\mathup{span}\left\{\psi_{i,k}^{j,m}:1 \leqslant j \leqslant l_i, 1 \leqslant i \leqslant N \right\}. \]
Similarly, we obtain the global multiscale basis by solving the equivalent problem to \cref{eq:ms_min}: find $ \psi_{i,k}^{j} $ such that
\begin{equation}\label{eq:char_ms_min_glo}
  {a}(\psi_{i,k}^{j} , v) + s(\pi \psi_{i,k}^{j} , \pi v) = s(\phi^j_i , \pi v),\  \forall v \in  V_k.
\end{equation}
The global-version multiscale finite element space is defined by
$$V_{k,\mathup{ms}}^{\mathup{glo}}=\mathup{span}\left\{\psi_{i,k}^{j}:1 \leqslant j \leqslant l_i, 1 \leqslant i \leqslant N \right\}$$ and will be utilized later in the analysis.

The error analysis theory of the original CEM-GMsFEM strongly relies on the existence of $ L^2 $ source term, we might consider introducing function ${\cal N}_i^m{p}$ by imitating the construction of multiscale bases. This approach aims to extend the applicability of the CEM-GMsFEM to cases where an $L^2$ source term is not readily available or suitable. The function ${\cal N}_i^m{p}$ is defined by solving the following local problem (\cref{eq:corrector}) in the oversampling domain $K_i^m$. We can see that the zero-extension of a function in $ V_k(K_i^m) $ still belongs to $ V_k $. It has been proved the solutions corresponding to $\int_{\partial {K_i} \cap \TN } {pv}\di\sigma$ as the RHS may also decay fast away from $K_i$.

Based on the motivations above, at the $k$-th iteration the computational method for obtain the ($k$+1)-th numerical solution consists of four steps:
\begin{description}
  \item[STEP 1] Find ${\cal N}_{i,k+1}^m{p} \in V_k(K_i^m)$ such that,
        \begin{equation}\label{eq:corrector}
          {a}\left( {{\cal N}_{i,k+1}^m{p},v} \right) + s\left( {\pi {\cal N}_{i,k+1}^m{p},\pi v} \right) = \int_{\partial {K_i} \cap \TN } {pv}\di\sigma,\  \forall v \in V_k(K_i^m),
        \end{equation}
        then do the summations as
        ${{\cal N}_{k+1}^m}{p} = \sum\limits_{i = 1}^N {{\cal N}_{i,k+1}^m} {p}.$
  \item[STEP 2] Construct the auxiliary space $V^{\mathup{aux}} $ by \cref{eq:eigen_pro} and the multiscale function space $V_{k,\mathup{ms}}^m$ by \cref{eq:char_ms_min}.
  \item[STEP 3] Solve ${w_{k+1}^m} \in V_{k,\mathup{ms}}^m$ such that for all $v \in V_{k,\mathup{ms}}^m$,
        \begin{equation}\label{eq:weak Neumann}
          {a}\left( {{w_{k+1}^m},v} \right) = \int_\Omega f v\di\bm{x} + \int_{{\Gamma _{\rm{N}}}} {{p}} v\di\sigma - {a}\left( {{{\cal N}_{k+1}^m}{p},v} \right).
        \end{equation}
  \item[STEP 4] Construct the numerical solution to approximate the real solution as
        \[u_{k+1}^m \approx {w_{k+1}^m} + {{\cal N}_{k+1}^m}{p}.\]
\end{description}

Note that there are $(m+1)N$ oversampling domains whose intersection with the contact boundary $\TC$ is not the empty set. It is only necessary to update the function space $V_k(K_i^m)$ for solving \cref{eq:char_ms_min} on no more than $(m+1)N$ oversampling domains $K_i^m$, i.e., reset the degrees of freedom according to the active set $\mathcal{A}_{k}$. During the iteration process, the auxiliary space remains unchanged. Moreover, it is not necessary to update all the multiscale basis functions; only a subset of the Neumann boundary correctors and multiscale basis functions require updating. Note that \cref{eq:eigen_pro}, \cref{eq:char_ms_min}, \cref{eq:corrector}, and \cref{eq:weak Neumann} are all solved on a fine mesh $\mathcal{T}^h$, which is a refinement of $\mathcal{T}^H$. For brevity, we will not explicitly point this out here and in the following analysis parts. From the computational steps presented above, the multiscale finite element space $V_{k,\mathup{ms}}^m$ is reusable for different source terms, which will greatly accelerate computations in simulations. Moreover, if several particular boundary values (i.e., $p$) that we are interested in admit a low-dimensional structure, it is also possible to build abstract operators ${\cal N}^m$ to achieve an overall saving in computational resources.

\section{Analysis}\label{sec:anal}
In this section, we will analyze the convergence of the iterative CEM-GMsFEM method. We will first show that the multiscale basis functions constructed in the CEM-GMsFEM method can approximate the solution of the contact problem with high contrast coefficients. Then we will prove the convergence of the iterative CEM-GMsFEM method. We will also present the error estimate of the numerical solution obtained by the CEM-GMsFEM method.
\begin{proposition}\label{prop:The stopping rule}
  Let $\mathcal{A}_{k,\mathup{cem}}$ be the fine node index set of active set mapped from the multiscale space on contact boundary $\TC$, and let $\mathcal{A}_{k,h}$ be the fine node index set of active set, with C as the fine node index set on contact boundary $\TC$. Defined $\mathcal{A}_{k,h}$ as follows $\mathcal{A}_{k,h}\coloneqq\left\{x_i\in C: \lambda_{k}^i+cu_{k}^i> 0\right\}$ and let $\mathcal{I}_{k,h}\coloneqq\left\{x_i\in C: \lambda_k^i+cu_k^i\leqslant0\right\}=\left\{x_i\in C\right\} \setminus \mathcal{A}_{k,h}$.
  Assume $\mathcal{A}_{k,h}=\mathcal{A}_{k,\mathup{cem}}$, when $\mathcal{A}_{k+1,h}=\mathcal{A}_{k,h}$, $(u_k, \lambda_k)$ is the solution of the iteration (\cref{eq:iteration-lambda}), the iteration converges in finite steps.
\end{proposition}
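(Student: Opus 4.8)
The plan is to treat \cref{algo} as the semismooth Newton iteration of \cref{subsec:semi-new} applied to the discretized complementarity system on the multiscale space, and to exploit two finite-dimensional facts: every linear solve in the loop is uniquely solvable, and there are only finitely many candidate active sets. First I would unwind the hypotheses. The assumption $\mathcal{A}_{k,h}=\mathcal{A}_{k,\mathrm{cem}}$ says that the active fine nodes read off from the multiscale iterate $(u_k,\lambda_k)$ are exactly the nodes whose degrees of freedom are frozen in $V_k$ and in the oversampled spaces $V_k(K_i^m)$; consequently STEP 1 and STEP 3 together produce, via \cref{eq:variational iteration-1}, a genuine Galerkin solution on $V_{k,\mathup{ms}}^m$ with the nodes of $\mathcal{A}_{k,h}$ removed. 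Because $\kappa\geqslant\kappa'>0$ and $\TD\neq\emptyset$, the form $a(\cdot,\cdot)$ is coercive on that space, so $u_{k+1}$ is well defined and $\lambda_{k+1}=-\kappa\nabla u_{k+1}\cdot\bm{n}$ on $\TC$ is determined by it. Hence the update $\mathcal{A}_k\mapsto\mathcal{A}_{k+1}$ is a single-valued map on $2^{C}$, which is a finite set since $C$ is a finite index set.

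Next I would check that the stopping test certifies a true solution. Suppose $\mathcal{A}_{k+1,h}=\mathcal{A}_{k,h}=:\mathcal{A}$, and put $\mathcal{I}=C\setminus\mathcal{A}$. By construction of \cref{eq:variational iteration-1}, $u_{k+1}^{i}=0$ for $x_i\in\mathcal{A}$ and $\lambda_{k+1}^{i}=0$ for $x_i\in\mathcal{I}$, so $\lambda_{k+1}^{i}u_{k+1}^{i}=0$ at every node of $C$. For the sign conditions: on $\mathcal{I}$ we have $\lambda_{k+1}^{i}=0$, so $x_i\in\mathcal{A}_{k+1}$ iff $cu_{k+1}^{i}>0$; since $\mathcal{A}_{k+1}=\mathcal{A}$ is disjoint from $\mathcal{I}$, this forces $u_{k+1}^{i}\leqslant0$ on $\mathcal{I}$. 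On $\mathcal{A}$ we have $u_{k+1}^{i}=0$, so $x_i\in\mathcal{A}_{k+1}$ iff $\lambda_{k+1}^{i}>0$; since $\mathcal{A}_{k+1}=\mathcal{A}$, this gives $\lambda_{k+1}^{i}>0$ on $\mathcal{A}$. Combined with the variational identity in \cref{eq:variational iteration-1}, these are exactly the discrete KKT conditions modeled on \cref{eq:kkt}, and by the uniqueness for the discrete analogue of \cref{eq:var inequ} (cf.\ Theorem~3.9 of \cite{Kikuchi1988}, equivalently strict convexity of $F$ on the discrete convex set) the terminal pair is \emph{the} discrete solution; this is the sense in which $(u_k,\lambda_k)$ (precisely: the pair returned when the loop halts) solves \cref{eq:iteration-lambda}.

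It remains to show the loop halts after finitely many steps, i.e.\ that $\mathcal{A}_{k+1}=\mathcal{A}_k$ occurs for some $k$. Since $2^{C}$ is finite and the update map is single-valued, the orbit $\mathcal{A}_0,\mathcal{A}_1,\dots$ is eventually periodic, so it is enough to exclude cycles of period at least $2$. For this I would use the identification of \cref{algo} with the Newton iteration $\lambda_{k+1}=\lambda_k-G_F(\lambda_k)^{-1}F(\lambda_k)$ for $F$ of \cref{eq:F(lambda)}, derived in \cref{subsec:semi-new}: on the finite-dimensional discrete space the generalized derivatives $G_F(\lambda_k)$ of \cref{eq:derivative of F} are uniformly invertible, so by \cref{prop:max-operator}, \cref{prop:chain rule} and \cref{thm:Newton} the iterates converge superlinearly to the unique discrete root $\lambda^{*}$. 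Because the discrete residual is piecewise affine, once $\lambda_k$ is close enough to $\lambda^{*}$ that the sign of $\lambda_k^{i}+cu_k^{i}$ agrees with that of $\lambda^{*,i}+cu^{*,i}$ at every node where the latter is nonzero, $\mathcal{A}_k$ equals the optimal active set $\{x_i\in C:\lambda^{*,i}+cu^{*,i}>0\}$; the next solve of \cref{eq:variational iteration-1} then returns $(u^{*},\lambda^{*})$ exactly, whence $\mathcal{A}_{k+1}=\mathcal{A}_k$ and the algorithm stops. A self-contained alternative, available when the fine stiffness matrix is an M-matrix, is the monotonicity argument of \cite{Hintermueller2002}: the primal iterates become feasible after the first step and the active sets cannot recur afterwards, so termination occurs in at most $\Card(C)+1$ steps.

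The first two parts are bookkeeping; the genuine obstacle is the last one, since finiteness of $2^{C}$ only yields eventual periodicity, not termination. Making the step ``superlinear convergence $\Rightarrow$ exact finite termination'' rigorous requires care at \emph{biactive} nodes---those with $\lambda^{*,i}+cu^{*,i}=0$, where the sign of the residual need not stabilize---while the clean M-matrix route imposes structural conditions on $\kappa$ and the mesh. So the crux is to pin down which of these hypotheses the statement silently uses (the phrasing, tying everything to \cref{eq:iteration-lambda}, points to the semismooth Newton route via \cref{thm:Newton}) and then to combine superlinear convergence with the finiteness of the candidate active sets and the uniqueness of the discrete KKT solution to seal the argument.
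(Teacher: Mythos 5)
Your second paragraph is, in substance, the paper's own proof of the certification step: on the (unchanged) active set $u_{k+1}=0$ and the membership condition forces $\lambda_{k+1}>0$; on the inactive set $\lambda_{k+1}=0$ and $u_{k+1}\leqslant 0$; hence the complementarity residual $F(\lambda_{k+1})$ of \cref{eq:F(lambda)} vanishes at every node of $C$, so the terminal pair solves the discrete KKT system. Where you genuinely diverge is on finite termination. The paper disposes of this in a single sentence: because there are only finitely many possible active sets $\mathcal{A}_k$, termination occurs in finitely many steps (using the standing assumption $\mathcal{A}_{k,h}=\mathcal{A}_{k,\mathrm{cem}}$ to identify the multiscale and fine-scale active sets). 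You correctly observe that this counting argument is not by itself conclusive --- a single-valued update on the finite set $2^{C}$ only guarantees eventual periodicity, and cycles of period at least two must still be excluded --- and you propose to close that gap either through the superlinear local convergence of the semismooth Newton iteration (\cref{thm:Newton}) combined with a sign-stabilization argument, or through the M-matrix monotonicity argument of \cite{Hintermueller2002}. Neither device appears in the paper; its proof is the more elementary, and strictly speaking incomplete, pigeonhole argument. Your route buys an actual termination proof, at the cost of extra hypotheses (uniform invertibility of the generalized derivatives near the root, or M-matrix structure of the fine-scale stiffness matrix, which for a high-contrast $\kappa$ on a general mesh is not automatic) and the careful handling of biactive nodes that you rightly flag as the delicate point. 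In short: your certification step matches the paper's; your termination step supplies machinery the paper omits and is the more defensible of the two.
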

\begin{proof}
  Choosing $ x_i \in \mathcal{A}_{k,h}$ we have $u_{k+1} = 0$. Similarly, for $x_i \in \mathcal{I}_{k,h}$ it follows that $\lambda_{k+1} = 0$.
  Since $\mathcal{A}_{k+1,h} = \mathcal{A}_{k,h}$ and $\mathcal{I}_{k+1,h} = \mathcal{I}_{k,h}$, we obtain $\lambda_{k+1}+cu_{k+1}> 0$, which implies that $\lambda_{k+1} >0$ for $x \in \mathcal{A}_{k+1,h}$. Meanwhile $\lambda_{k+1}+cu_{k+1}\leqslant 0$, thus $u_{k+1}\leqslant 0$, for $x \in \mathcal{I}_{k+1,h}$. Together with \cref{eq:iteration-lambda}, it proves that the iterate upon termination of \cref{algo} satisfies $F(\lambda_{k+1})= 0$ for $x_i \in C$.

  Given that $\mathcal{A}_{k,h}=\mathcal{A}_{k,\mathup{cem}}$ for all $k\geqslant0$, the successful termination occurs after a finite number of iterations, since there exists only a finite number of choices for $\mathcal{A}_k$ (and similarly for $\mathcal{I}_k$), which guarantees finite step convergence.
\end{proof}

Assume that the iteration stops at step $k_0$, which means $\mathcal{A}_{k_0-1}=\mathcal{A}_{k_0}$, then we can obtain the corresponding $V_{k_0}(K_i^m),\ V_{k_0,\mathup{ms}}^m$ and $V_{k_0,\mathup{ms}}^{\mathup{glo}}$.

\begin{lemma}\label{lemma:orthogonal}
  (see \cite{Chung2018}) Let $v\in V_{k_0,\mathup{ms}}^{\mathup{glo}}$, then $a(v,v')=0$ for any $v'\in V$ with $\pi v'=0$. Moreover, if there exists $v\in V$ such that $a(v,v')=0$ for any $v'\in V_{k_0,\mathup{ms}}^{\mathup{glo}}$ then $\pi v'=0$.
\end{lemma}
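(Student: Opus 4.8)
The plan is to derive both statements from the Galerkin characterization \cref{eq:char_ms_min_glo} of the global multiscale basis; the first is immediate, while the second rests on a finite‑dimensional invertibility argument. For \cref{eq:char_ms_min_glo} to be applicable I interpret the ambient space in the statement as the iteration space $V_{k_0}$ against which that relation is tested at termination, and I read the conclusion of the ``moreover'' part as $\pi v=0$ for the function $v$ supplied by the hypothesis. For the first assertion, by linearity it is enough to fix a single basis function $\psi_{i,k_0}^{j}$ and an arbitrary $v'\in V_{k_0}$ with $\pi v'=0$: then \cref{eq:char_ms_min_glo} gives $a(\psi_{i,k_0}^{j},v')=s(\phi_i^j,\pi v')-s(\pi\psi_{i,k_0}^{j},\pi v')=0$, and summing over the expansion coefficients of a general $v\in V_{k_0,\mathup{ms}}^{\mathup{glo}}$ yields $a(v,v')=0$.

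For the converse I would introduce the linear solution operator $\mu\mapsto\psi(\mu)\in V_{k_0}$ defined by $a(\psi(\mu),v')+s(\pi\psi(\mu),\pi v')=s(\mu,\pi v')$ for all $v'\in V_{k_0}$, which is well posed by Lax--Milgram because $a$ is coercive on $V_{k_0}\subseteq V$ via the Poincar\'e--Friedrichs inequality associated with $\TD$; thus $\psi(\phi_i^j)=\psi_{i,k_0}^{j}$ and $V_{k_0,\mathup{ms}}^{\mathup{glo}}=\{\psi(\mu):\mu\in V^{\mathup{aux}}\}$ by linearity. Setting $S\mu\coloneqq\pi\psi(\mu)$, a linear endomorphism of the finite‑dimensional space $V^{\mathup{aux}}$, suppose $v\in V_{k_0}$ satisfies $a(v,w)=0$ for every $w\in V_{k_0,\mathup{ms}}^{\mathup{glo}}$. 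Testing the defining relation of $\psi(\mu)$ with $v'=v$, the term $a(\psi(\mu),v)$ drops out by the hypothesis and symmetry of $a$, leaving $s\big((I-S)\mu,\pi v\big)=0$ for every $\mu\in V^{\mathup{aux}}$.

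The crux, and the step I expect to be the main obstacle, is showing that $I-S$ maps $V^{\mathup{aux}}$ onto itself; by finite‑dimensionality this is equivalent to injectivity, i.e.\ to the implication $S\mu=\mu\Rightarrow\mu=0$. I would establish it by testing the equation for $\psi(\mu)$ against $v'=\psi(\mu)$ itself: when $\pi\psi(\mu)=\mu$ the two $s$‑contributions cancel and one is left with $a(\psi(\mu),\psi(\mu))=0$, whence $\psi(\mu)=0$ by coercivity and then $\mu=\pi\psi(\mu)=0$. With surjectivity of $I-S$ in hand, $(I-S)\mu$ ranges over all of $V^{\mathup{aux}}$, so $s(\xi,\pi v)=0$ for every $\xi\in V^{\mathup{aux}}$; choosing $\xi=\pi v\in V^{\mathup{aux}}$ and using that $\norm{\cdot}_s$ is genuinely a norm on $V^{\mathup{aux}}$ — a consequence of the $s$‑orthogonality of the eigenfunctions $\phi_i^j$ — forces $\pi v=0$. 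The only remaining verifications are routine: coercivity of $a$ on $V_{k_0}$ and positive‑definiteness of $s$ on $V^{\mathup{aux}}$.
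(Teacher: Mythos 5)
Your argument is correct, and since the paper offers no proof of this lemma (it is quoted from \cite{Chung2018}), there is nothing to deviate from: part one is the immediate consequence of \cref{eq:char_ms_min_glo} that you give, and your treatment of the ``moreover'' direction --- reducing it to surjectivity of $I-S$ on the finite-dimensional space $V^{\mathup{aux}}$, proving injectivity by testing the defining relation of $\psi(\mu)$ against itself, and then concluding $s(\pi v,\pi v)=0$ --- is exactly the standard argument needed for the \emph{relaxed} (penalized) basis construction \cref{eq:ms_min}, where $\pi\psi_{i,k_0}^{j}=\phi_i^j$ no longer holds exactly and the bijectivity of $\pi$ restricted to the multiscale space is genuinely something to prove. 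Your interpretive fixes are also the right ones: the lemma's $v'\in V$ must be read as $v'\in V_{k_0}$ (the test space in \cref{eq:char_ms_min_glo}; for $v'$ not vanishing on $\mathcal{A}_{k_0}$ the variational relation gives no control), and the final ``$\pi v'=0$'' is a typo for $\pi v=0$. The only point worth stating explicitly rather than deferring as ``routine'' is that $\norm{\cdot}_s$ is positive definite on all of $L^2(\Omega)$ because $\tilde\kappa>0$ almost everywhere, which is simpler than appealing to $s$-orthogonality of the eigenfunctions.
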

For the boundary corrector, we can also define the corresponding globalized operator:
\begin{equation}\label{eq:corrector_glo}
  {a}\left( {{\cal N}_i^\mathup{glo}{p},v} \right) + s\left( {\pi {\cal N}_i^\mathup{glo}{p},\pi v} \right) = \int_{\partial {K_i} \cap \TN } {pv}\di\sigma,\  \forall v \in V.
\end{equation}
Let ${\cal N}^\mathup{glo} \coloneqq \sum_{i=1}^{N}{\cal N}_i^\mathup{glo}$ and $w^\mathup{glo}$ satisfies for all $v\in V_{k_0,\mathup{ms}}^\mathup{glo}$,
\begin{equation}\label{eq:w_glo}
  {a}\left( {{w^\mathup{glo}},v} \right) = \int_\Omega f v\di\bm{x} + \int_{{\Gamma _{\rm{N}}}} {{p}} v\di\sigma - {a}\left( {{{\cal N}^\mathup{glo}}{p},v} \right).
\end{equation}
Then we can obtain the following error estimate.
\begin{theorem}
  See \cite{Ye2023} Let ${\cal N}_i^\mathup{glo}p, w^\mathup{glo}$ be the solutions of \cref{eq:corrector_glo} and \cref{eq:w_glo} respectively. Let $u$ be the solution of the \cref{eq:variational iteration-1}. Then
  \begin{equation}\label{eq:error estimate}
    \norm{w^\mathup{glo}+{\cal N}^\mathup{glo}p-u}_a\leqslant \frac{1}{\sqrt{\Lambda}}\norm{f}_{s^{-1}},
  \end{equation}
  where $$ \norm{f}_{s^{-1}}\coloneqq \mathop {\sup }\limits_{v\in L^2(\Omega)} \frac{\int_\Omega fv \di x}{\norm{v}_s},$$
  $ \Lambda=\min_i \lambda_i^{l_i+1}$, and ${\cal N}^\mathup{glo}p \coloneqq \sum_{i=1}^{N}{\cal N}_i^\mathup{glo}p$.
\end{theorem}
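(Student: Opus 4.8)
The plan is to run the standard CEM-GMsFEM duality estimate, carrying the Neumann boundary corrector through the computation. Write $u_{\mathup{ms}} \coloneqq w^{\mathup{glo}} + {\cal N}^{\mathup{glo}}p$ and set $z \coloneqq u - u_{\mathup{ms}}$; the whole point is to bound $\norm{z}_a$. First I would record a Galerkin orthogonality. Each global multiscale basis function solves \cref{eq:char_ms_min_glo} with trial and test functions in $V_{k_0}$, so $V_{k_0,\mathup{ms}}^{\mathup{glo}} \subset V_{k_0}$; therefore testing \cref{eq:variational iteration-1} against an arbitrary $v \in V_{k_0,\mathup{ms}}^{\mathup{glo}}$ and subtracting \cref{eq:w_glo} gives $a(z,v) = 0$ for every $v \in V_{k_0,\mathup{ms}}^{\mathup{glo}}$. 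Since $z \in V_{k_0} \subset V$, the second statement of \cref{lemma:orthogonal} then yields $\pi z = 0$, hence $\pi_i z = 0$ on each coarse block $K_i$.

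Next I would compute $\norm{z}_a^2 = a(z,z)$ by expanding $z = u - w^{\mathup{glo}} - {\cal N}^{\mathup{glo}}p$ and handling the three pieces. Using $z \in V_{k_0}$ as a test function in \cref{eq:variational iteration-1} gives $a(u,z) = \int_\Omega f z \di\bm{x} + \int_{\Gamma_N} p z \di\sigma$. Since $w^{\mathup{glo}} \in V_{k_0,\mathup{ms}}^{\mathup{glo}}$ and $\pi z = 0$, the first statement of \cref{lemma:orthogonal} forces $a(w^{\mathup{glo}}, z) = 0$. Finally, testing the globalized corrector equation \cref{eq:corrector_glo} with $z$ and using $\pi z = 0$ to annihilate the $s$-term gives $a({\cal N}^{\mathup{glo}}p, z) = \int_{\Gamma_N} p z \di\sigma$. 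Adding the three identities, the Neumann contributions cancel and one is left with the clean relation $\norm{z}_a^2 = \int_\Omega f z \di\bm{x}$.

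It then remains to convert this into the claimed bound via the spectral gap. By the definition of $\norm{\cdot}_{s^{-1}}$ (legitimate since $z \in L^2(\Omega)$), $\int_\Omega f z \di\bm{x} \leqslant \norm{f}_{s^{-1}} \norm{z}_s$. For the factor $\norm{z}_s$ I would apply the basic estimate \cref{eq:basic_est_1} block by block: as $\pi_i z = 0$, $\norm{z}_{s(K_i)}^2 = \norm{z - \pi_i z}_{s(K_i)}^2 \leqslant (\lambda_i^{l_i+1})^{-1}\norm{z}_{a(K_i)}^2 \leqslant \Lambda^{-1}\norm{z}_{a(K_i)}^2$, and summation over $i$ gives $\norm{z}_s^2 \leqslant \Lambda^{-1}\norm{z}_a^2$. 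Stringing the inequalities together yields $\norm{z}_a^2 \leqslant \Lambda^{-1/2}\norm{f}_{s^{-1}}\norm{z}_a$, and dividing by $\norm{z}_a$ (the bound being trivial when $z=0$) gives $\norm{z}_a \leqslant \Lambda^{-1/2}\norm{f}_{s^{-1}}$, i.e.\ \cref{eq:error estimate}.

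The step I expect to need the most care is the bookkeeping of function spaces in the orthogonality argument: one must check that $w^{\mathup{glo}}$ and ${\cal N}^{\mathup{glo}}p$ — and hence $z$ — all lie in the constrained space $V_{k_0}$, so that $z$ is an admissible test function in \cref{eq:variational iteration-1}, while still lying in $V$ so that both directions of \cref{lemma:orthogonal} apply. In particular one should make sure the zero-extension convention for the local correctors keeps the globalized corrector ${\cal N}^{\mathup{glo}}p$ in $V_{k_0}$, and that \cref{lemma:orthogonal}, quoted over $V$, transfers verbatim to the subspace $V_{k_0}$. Everything past that is the routine chain built on the a priori estimate \cref{eq:basic_est_1} already recorded.
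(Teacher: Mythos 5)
Your proposal is correct and is essentially the standard argument: the paper itself gives no proof here (it defers to the cited reference), and that reference's proof is exactly your chain of Galerkin orthogonality, \cref{lemma:orthogonal} to get $\pi z=0$, cancellation of the Neumann terms via \cref{eq:corrector_glo}, and the spectral bound \cref{eq:basic_est_1} giving the factor $\Lambda^{-1/2}$. The function-space bookkeeping you flag (that $w^{\mathup{glo}}$, ${\cal N}^{\mathup{glo}}p$, and hence $z$ lie in $V_{k_0}$ so that $z$ is admissible in \cref{eq:variational iteration-1}) is indeed the only point requiring care, and it holds under the paper's conventions.
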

We can also derive the an estimate show that the boundary corrector has the exponential decay property.
\begin{corollary}\label{corollary} (See \cite{Ye2023})
  Let $ m>1 $, there exists positive constants $c$, $ 0<\theta<1 $ such that
  \begin{equation}\label{eq:decay}
    \norm{({\cal N}^\mathup{glo}-{\cal N}^m)p}_a^2+\norm{\pi({\cal N}^\mathup{glo}-{\cal N}^m)p}_s^2\leqslant cC_{\mathup{tr}}^2\theta^{m-1}(m+1)^d\norm{p}_{L^2(\TN)},
  \end{equation}
  where $C_{\mathup{tr}}$ is the norm of the trace operator $ V\to L^2(\TN) $,
  \[C_{\mathup{tr}}\coloneqq\sup\left\{\frac{\norm{v}_{L^2(\TN)}}{\norm{v}_a}:v\in V,v\neq 0\right\}.\]
\end{corollary}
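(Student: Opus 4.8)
The plan is to reduce \cref{eq:decay} to the exponential decay of the \emph{global} corrector $\mathcal{N}_i^{\mathup{glo}}p$ away from its coarse block $K_i$, and then to bound the localization error $\mathcal{N}_i^{\mathup{glo}}p-\mathcal{N}_i^m p$ by the energy of $\mathcal{N}_i^{\mathup{glo}}p$ outside the oversampling domain $K_i^m$. Throughout I would work with the symmetric coercive bilinear form $b(u,v)\coloneqq a(u,v)+s(\pi u,\pi v)$, since both \cref{eq:corrector_glo} and \cref{eq:corrector} read $b(\mathcal{N}_i p,v)=\int_{\partial K_i\cap\TN}pv\di\sigma$, tested over $V$ and over the zero-extension of $V(K_i^m)$ respectively; the key structural fact is that the right-hand side is supported on $\partial K_i$, so it pairs trivially with any admissible test function that vanishes in a neighborhood of $K_i$.

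First I would establish the decay of the global corrector. Fix $i$ and, for $j\geqslant1$, let $\chi_j$ be the coarse-grid nodal cutoff equal to $0$ on $K_i^{j}$ and to $1$ on $\Omega\setminus K_i^{j+1}$, so that $\chi_j$ varies only on the one-layer annulus $K_i^{j+1}\setminus K_i^{j}$ with $|\nabla\chi_j|\lesssim H^{-1}$ there. Testing the equation for $\mathcal{N}_i^{\mathup{glo}}p$ against $v=\chi_j^2\,\mathcal{N}_i^{\mathup{glo}}p\in V$, which annihilates the right-hand side since $j\geqslant1$, and expanding $b(\mathcal{N}_i^{\mathup{glo}}p,\chi_j^2\mathcal{N}_i^{\mathup{glo}}p)=0$ in the usual Caccioppoli fashion, one controls the gradient cross term $\int\kappa\,\nabla\chi_j\cdot\nabla(\mathcal{N}_i^{\mathup{glo}}p)\,\chi_j\mathcal{N}_i^{\mathup{glo}}p$ using $|\nabla\chi_j|^2\kappa\lesssim\tilde\kappa$ on the transition layer (see \cref{eq:kappa}) and the $s$-term using the blockwise locality of $\pi$ together with \cref{eq:basic_est_1}--\cref{eq:basic_est_2}. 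This yields a recursion $E_{j+1}\leqslant C_0(E_{j-1}-E_{j+1})$ for the tail energies $E_j\coloneqq\norm{\mathcal{N}_i^{\mathup{glo}}p}_{a(\Omega\setminus K_i^{j})}^2+\norm{\pi\mathcal{N}_i^{\mathup{glo}}p}_{s(\Omega\setminus K_i^{j})}^2$, hence $E_m\leqslant c\,\theta^{m-1}E_1$ with $\theta\in(0,1)$, and by coercivity of $b$ and the trace inequality $E_1\leqslant b(\mathcal{N}_i^{\mathup{glo}}p,\mathcal{N}_i^{\mathup{glo}}p)\lesssim C_{\mathup{tr}}^2\norm{p}_{L^2(\partial K_i\cap\TN)}^2$.

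Next I would bound the localization error. Taking a cutoff $\chi$ equal to $1$ on $K_i^{m-1}$ and $0$ on $\Omega\setminus K_i^m$, the function $\chi\mathcal{N}_i^{\mathup{glo}}p$ extended by zero lies in $V(K_i^m)$, and since $\mathcal{N}_i^m p$ is the $b$-orthogonal projection of $\mathcal{N}_i^{\mathup{glo}}p$ onto that space, the best-approximation property gives $\norm{\mathcal{N}_i^{\mathup{glo}}p-\mathcal{N}_i^m p}_b^2\leqslant\norm{(1-\chi)\mathcal{N}_i^{\mathup{glo}}p}_b^2$; as $(1-\chi)\mathcal{N}_i^{\mathup{glo}}p$ is supported in $\Omega\setminus K_i^{m-1}$, this is bounded by $CE_{m-2}\lesssim C_{\mathup{tr}}^2\theta^{m-1}\norm{p}_{L^2(\partial K_i\cap\TN)}^2$. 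Finally I would sum over $i=1,\dots,N$: since each point of $\Omega$ belongs to at most $C(m+1)^d$ of the oversampling domains $K_i^m$, this gives $\norm{(\mathcal{N}^{\mathup{glo}}-\mathcal{N}^m)p}_a^2+\norm{\pi(\mathcal{N}^{\mathup{glo}}-\mathcal{N}^m)p}_s^2\leqslant C(m+1)^d\sum_{i=1}^N\norm{\mathcal{N}_i^{\mathup{glo}}p-\mathcal{N}_i^m p}_b^2\lesssim C_{\mathup{tr}}^2(m+1)^d\theta^{m-1}\sum_{i=1}^N\norm{p}_{L^2(\partial K_i\cap\TN)}^2$, and the essentially disjoint decomposition $\sum_{i=1}^N\norm{p}_{L^2(\partial K_i\cap\TN)}^2=\norm{p}_{L^2(\TN)}^2$ reduces the last sum, giving \cref{eq:decay}.

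The main obstacle is the recursive energy estimate for the global corrector. Because $\pi=\sum_i\pi_i$ is only blockwise local, the term $s(\pi\mathcal{N}_i^{\mathup{glo}}p,\pi(\chi_j^2\mathcal{N}_i^{\mathup{glo}}p))$ does not split cleanly along the cutoff; one must isolate its contribution on the single transition layer, bound it against $\norm{\mathcal{N}_i^{\mathup{glo}}p}_{s(K_i^{j+1}\setminus K_i^{j})}^2$, and then convert this $s$-energy back into $b$-energy on a slightly wider annulus via \cref{eq:basic_est_1}--\cref{eq:basic_est_2}. Obtaining the contraction factor $\theta$ and the constant $C_0$ independent of the contrast $\kappa'/\kappa''$ is precisely where the spectral construction of $V^{\mathup{aux}}$ via \cref{eq:eigen_pro} and the eigenvalue gap $\Lambda=\min_i\lambda_i^{l_i+1}$ enters; this is the technical heart of the estimate, carried out in \cite{Chung2018,Ye2023}.
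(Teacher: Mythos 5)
The paper does not actually prove this corollary --- it is imported verbatim from \cite{Ye2023} --- so there is no in-paper argument to compare against; your reconstruction follows exactly the standard CEM-GMsFEM decay proof used in that reference and in \cite{Chung2018}: Caccioppoli iteration on annuli for the global corrector (the right-hand side being supported on $\partial K_i$ is indeed the key structural fact), C\'ea/best-approximation in the $b$-norm against a cutoff of the global corrector for the localization error, and an overlap count for the global sum. Two small points. First, your justification of the summation step is too quick as written: the summands $w_i=(\mathcal{N}_i^{\mathup{glo}}-\mathcal{N}_i^m)p$ are \emph{not} supported in $K_i^m$ (the global corrector lives on all of $\Omega$), so the bound $\norm{\sum_i w_i}_a^2\leqslant C(m+1)^d\sum_i\norm{w_i}_a^2$ does not follow from pointwise overlap counting alone; the standard argument writes $\norm{w}_b^2=\sum_i b(w_i,w)$, splits $w$ with a cutoff supported on $K_i^{m+1}$, and uses the Galerkin orthogonality $b(w_i,v)=0$ for admissible $v$ supported away from $K_i$ to kill the far-field part --- this is where the $(m+1)^d$ actually comes from. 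Second, your final estimate carries $\norm{p}_{L^2(\TN)}^2$, which is the dimensionally consistent form; the unsquared $\norm{p}_{L^2(\TN)}$ in \cref{eq:decay} appears to be a typo in the statement, so your version is the one to trust.
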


To analyze function space $V_{k_0,\mathup{ms}}^{m}$ and $V_{k_0,\mathup{ms}}^{\mathup{glo}}$, we need to introduce two operators $ \mathcal{G}^{\mathup{glo}}:L^2(\Omega)\to V$ and $ \mathcal{G}^m:L^2(\Omega)\to V$ defined as follows: for any $\phi \in L^2(\Omega)$ , find $ \mathcal{G}_i^{\mathup{glo}}\phi \in V $ such that
\begin{equation}\label{eq:G_glo}
  a(\mathcal{G}_i^{\mathup{glo}}\phi,v)+s(\pi\mathcal{G}_i^{\mathup{glo}}\phi,\pi v)=s(\pi_i \phi,\pi v),\ \forall v\in V,
\end{equation}
and find $ \mathcal{G}_i^m\phi \in V $ such that
\begin{equation}\label{eq:G_m}
  a(\mathcal{G}_i^m\phi,v)+s(\pi\mathcal{G}_i^m\phi,\pi v)=s(\pi_i \phi,\pi v),\ \forall v\in V_{k_0}(K_i^m),
\end{equation}
where $ \mathcal{G}^{\mathup{glo}}\phi = \sum_{i=1}^{N}\mathcal{G}_i^{\mathup{glo}}\phi $ and $ \mathcal{G}^m\phi = \sum_{i=1}^{N}\mathcal{G}_i^m\phi $. The map $ \pi_i $ is onto $ V_i^{\mathup{aux}} $ and by the definition of $V_{k_0,\mathup{ms}}^m$ and $V_{k_0,\mathup{ms}}^{\mathup{glo}}$, we have the range of $ \mathcal{G}^{\mathup{glo}}$ is $ V_{k_0,\mathup{ms}}^{\mathup{glo}} $ and the range of $ \mathcal{G}^m$ is $ V_{k_0,\mathup{ms}}^{m} $.

The following lemma can be viewed as an inverse estimate of $ V^\mathup{aux} $ to $ V $.
\begin{lemma}\label{lemma:inverse}
  (See \cite{Chung2018}) There exists a positive constant $C_\mathup{inv}$ such that for any $ v\in L^2(\Omega) $, there exists $ {\hat v} \in V $ with $ \pi {\hat v}=\pi v $ and $\norm{{\hat v}}_a\leqslant C_\mathup{inv}\norm{\pi v}_a$.
\end{lemma}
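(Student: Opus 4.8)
The plan is to reduce the global statement to a family of independent element-local constructions and then to bound the energy of each local lift by a scaling (reference-cell) argument. First I would localize. Since $\pi=\sum_{i=1}^N\pi_i$ with $\pi_i v\in V_i^{\mathup{aux}}$, and since functions of $V^{\mathup{aux}}$ are only piecewise $H^1$, the right-hand side must be read as the broken norm $\norm{\pi v}_a^2=\sum_{i=1}^N\norm{\pi_i v}_{a(K_i)}^2$. It therefore suffices to produce, on each coarse block $K_i$, a bubble $\hat v_i\in V\cap H_0^1(K_i)$ (extended by zero outside $K_i$) with $\pi_i\hat v_i=\pi_i v$ and a local bound $\norm{\hat v_i}_{a(K_i)}\le C_{\mathup{inv}}\norm{\pi_i v}_{a(K_i)}$. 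Setting $\hat v=\sum_i\hat v_i$ gives $\hat v\in V$ (each summand vanishes on $\partial K_i$, hence also on any part of $\TD$ it meets); because $\hat v_{i'}$ vanishes on $K_i$ for $i'\neq i$, one gets $\pi_i\hat v=\pi_i\hat v_i=\pi_i v$ for every $i$, so $\pi\hat v=\pi v$, and summing the local bounds gives the claim.

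Second, I would define each $\hat v_i$ as the minimizer of $\norm{w}_{a(K_i)}^2$ over $w\in V\cap H_0^1(K_i)$ subject to the finitely many constraints $s_i(w,\phi_i^j)=s_i(v,\phi_i^j)$ for $1\le j\le l_i$, which is exactly $\pi_i w=\pi_i v$. Before estimating I must verify feasibility: the moment map $w\mapsto\bigl(s_i(w,\phi_i^j)\bigr)_{j=1}^{l_i}$ from the bubble space onto $\Real^{l_i}$ is surjective, because its component functionals are linearly independent on $H_0^1(K_i)$ (if $s_i(w,\sum_j a_j\phi_i^j)=0$ for all bubbles $w$, density of $H_0^1(K_i)$ in $L^2(K_i)$ and positivity of $\tilde\kappa$ force $\sum_j a_j\phi_i^j=0$, whence $a_j=0$ by the $s_i$-orthonormality of the eigenfunctions). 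Existence and uniqueness of $\hat v_i$ then follow from a standard quadratic-minimization/Lagrange-multiplier argument, with the characterization $a_i(\hat v_i,w)=\sum_j\beta_j\,s_i(\phi_i^j,w)$ for all bubbles $w$.

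The crux, and the step I expect to be the main obstacle, is the local energy estimate, and here the stated $a$-norm right-hand side is genuinely delicate. Writing $\pi_i v=\sum_{j=1}^{l_i}c_j\phi_i^j$ with the $\phi_i^j$ being $s_i$-orthogonal, a reference-cell scaling argument (map $K_i$ to a unit block, realize each prescribed $s_i$-moment by a fixed bubble, and scale back) most naturally controls the lift by the $s$-norm, giving $\norm{\hat v_i}_{a(K_i)}\le C\norm{\pi_i v}_{s(K_i)}$ with a constant independent of the contrast thanks to the definition of $\tilde\kappa$ in \cref{eq:kappa}. To reach the stated bound I would then compare, via the spectral identities $\norm{\pi_i v}_{a(K_i)}^2=\sum_{j}\lambda_i^j c_j^2\norm{\phi_i^j}_{s(K_i)}^2$ and $\norm{\pi_i v}_{s(K_i)}^2=\sum_{j}c_j^2\norm{\phi_i^j}_{s(K_i)}^2$, the $s$- and $a$-norms of $\pi_i v$. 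The difficulty is precisely that these two are comparable only on the high-energy eigendirections: the lowest mode $\phi_i^1$, for which $\lambda_i^1=0$, contributes to $\norm{\pi_i v}_{s(K_i)}$ but not at all to $\norm{\pi_i v}_{a(K_i)}$, so the energy a bubble must spend to reproduce the (near-)constant component of $\pi_i v$ cannot in general be charged to $\norm{\pi_i v}_{a(K_i)}$. I would therefore split $\pi_i v$ into its $\phi_i^1$-part and its remainder, estimate the remainder cleanly by the eigenvalue-weighted $a(K_i)$-norm, and treat the constant mode separately; isolating exactly how the constant mode enters the constant $C_{\mathup{inv}}$ is where the argument is sharpest and is the heart of the matter.
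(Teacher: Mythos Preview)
The paper does not supply a proof; it simply cites \cite{Chung2018}. More importantly, the inequality as printed contains a typo: the right-hand side should read $\norm{\pi v}_s$, not $\norm{\pi v}_a$. This is visible in the paper's own use of the lemma a few lines below, where the estimate actually invoked is $\norm{\hat\phi_*}_a\le C_{\mathup{inv}}\norm{\pi\phi_*}_s$ (otherwise the displayed bound on $\norm{\pi\phi_*}_s^2$ does not follow). Your difficulty with the constant mode is therefore not a gap in your argument but a correct diagnosis that the $a$-norm version is simply false: take $\pi v$ piecewise constant with different values on two adjacent coarse blocks; then $\norm{\pi v}_a=0$ in the broken sense, yet any $\hat v\in V$ with $\pi\hat v=\pi v$ must be nonconstant and hence have $\norm{\hat v}_a>0$.

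For the intended $s$-norm statement your localize-and-lift strategy is the standard route. The construction in \cite{Chung2018} is explicit rather than variational: one multiplies $\pi_i v$ by a fixed bubble on $K_i$ built from the partition-of-unity hats $\eta_i^j$ and renormalizes to hit the required $s_i$-moments; the definition \cref{eq:kappa} of $\tilde\kappa$ then yields the contrast-independent bound $\norm{\hat v_i}_{a(K_i)}\le C\norm{\pi_i v}_{s(K_i)}$ directly. Your energy-minimizing bubble can only do better than this explicit competitor, so the same bound follows for it as well. In short, stop at the $s$-norm estimate you already obtained via the scaling argument and the proof is complete; the final paragraph attempting to pass to $\norm{\pi v}_a$ should be dropped.
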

We then consider as an approximation of problem \cref{eq:var inequ} the discrete problem of finding $u^m \in K^m$ such that
\begin{equation}\label{eq:var inequ H}
  a(u^m, v^m-u^m) \geqslant L(v^m-u^m), \  \forall v^m \in K^m,
\end{equation}
where $a(\cdot,\cdot)$ is a continuous and coercivity bilinear form on Hilbert space $V$. We can derive the following error estimate.
\begin{theorem}
  Let $u$ and $u^m$ be the solutions of \cref{eq:var inequ} and \cref{eq:var inequ H} respectively. Let the continuous linear map $A \in L(V, V')$ defined by $a(u, v) = (Au, v)$ $\forall u,v \in V$. Finally, suppose that the solution satisfies the regularity condition $f-Au \in W'$. Then, there exists a constant $C > 0$ such that
  \begin{equation}\label{eq:approxiamtion}
    \norm{u-u^m}_a\leqslant \frac{C}{\alpha}\norm{u-v^m}_a+\left\{\frac{2}{\alpha}\norm{f-Au}_{W'}{\left[ {\norm{u-v^m}_{W}+\norm{u^m-v}_{W}} \right]}\right\}^{1/2},
  \end{equation}
\end{theorem}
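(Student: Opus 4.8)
The plan is to adapt the classical Falk-type a priori estimate for variational inequalities of the first kind to the present setting. The starting point is the coercivity and continuity of $a(\cdot,\cdot)$: let $\alpha>0$ be the coercivity constant, so that $\alpha\norm{u-u^m}_a^2\leqslant a(u-u^m,u-u^m)$ up to norm-equivalence constants (or, if $\norm{\cdot}_a$ is literally the energy norm, $a(u-u^m,u-u^m)=\norm{u-u^m}_a^2$ and $\alpha=1$; I would keep the constant to match the stated inequality). I would expand
\[
\alpha\norm{u-u^m}_a^2\leqslant a(u-u^m,u-u^m)=a(u,u-u^m)-a(u^m,u-u^m),
\]
and then insert an arbitrary $v^m\in K^m$ into the second term and an arbitrary $v\in K$ into the first, writing $a(u,u-u^m)=a(u,u-v)+a(u,v-u^m)$ and similarly $a(u^m,u-u^m)=a(u^m,u-v^m)+a(u^m,v^m-u^m)$. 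The two variational inequalities \cref{eq:var inequ} and \cref{eq:var inequ H} give $a(u,v-u)\geqslant L(v-u)$ and $a(u^m,v^m-u^m)\geqslant L(v^m-u^m)$; subtracting these lower bounds converts the troublesome cross terms into $L$-terms plus a residual $a(u,v-u^m)+a(u,u-v)$-type combination.

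Next I would collect the residual. After the cancellations the right-hand side reduces to a bounded bilinear term $a(u-u^m,\,u-v^m)$ — controlled by continuity as $C\norm{u-u^m}_a\norm{u-v^m}_a$ — plus a consistency term of the form $\langle f-Au,\,(u-v^m)+(u^m-v)\rangle$, where I use $a(u,w)=(Au,w)$ and $L(w)=(f,w)$ (absorbing the Neumann data into $f$ via the variational identity). Using the regularity hypothesis $f-Au\in W'$, I bound this consistency term by $\norm{f-Au}_{W'}\bigl[\norm{u-v^m}_W+\norm{u^m-v}_W\bigr]$. At this stage the inequality reads
\[
\alpha\norm{u-u^m}_a^2\leqslant C\norm{u-u^m}_a\norm{u-v^m}_a+\norm{f-Au}_{W'}\bigl[\norm{u-v^m}_W+\norm{u^m-v}_W\bigr].
\]
Finally I would invoke the elementary lemma: if $x^2\leqslant bx+c$ with $x,b,c\geqslant 0$ then $x\leqslant b+\sqrt{c}$ (or the Young-inequality variant), applied with $x=\norm{u-u^m}_a$, $b=(C/\alpha)\norm{u-v^m}_a$ and $c=(1/\alpha)\norm{f-Au}_{W'}[\cdots]$; absorbing the resulting $\tfrac{2}{\alpha}$ and $C$ into the stated constants gives exactly \cref{eq:approxiamtion}.

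The main obstacle — and the only genuinely nontrivial bookkeeping — is the handling of the cross terms where the two convex sets $K$ and $K^m$ do not nest: one must choose the test function $v\in K$ in \cref{eq:var inequ} and $v^m\in K^m$ in \cref{eq:var inequ H} so that all four quantities $u,u^m,v,v^m$ pair up correctly and the signs from the two variational inequalities point the right way, since $u^m\notin K$ and $u\notin K^m$ in general. This is where the extra term $\norm{u^m-v}_W$ (rather than just $\norm{u-v^m}_W$, as in the conforming case) enters, and it is essential that $v\in K$ is kept free until the end so that one may later optimize over it. I would also need to check that the Neumann contribution $\int_{\TN}p v\,d\sigma$ is consistent with writing $L(w)=(f,w)_{W',W}$ after identifying $f-Au$ with the full residual functional; this is routine given $p$ and $f$ are fixed data, but should be stated carefully so the duality pairing $\langle f-Au,\cdot\rangle_{W',W}$ makes sense. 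Everything else — coercivity, continuity, and the scalar quadratic inequality — is standard.
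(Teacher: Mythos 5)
Your proposal is correct and follows essentially the same route as the paper: both are the classical Falk-type argument that tests the two variational inequalities with $v\in K$ and $v^m\in K^m$, arrives at $\alpha\norm{u-u^m}_a^2\leqslant C\norm{u-u^m}_a\norm{u-v^m}_a+\norm{f-Au}_{W'}\left[\norm{u-v^m}_W+\norm{u^m-v}_W\right]$, and closes with Young's inequality and the subadditivity of the square root. The only cosmetic differences are that you start from coercivity of $a(u-u^m,u-u^m)$ while the paper adds the two inequalities first, and your scalar lemma yields a marginally sharper constant; your remark about absorbing the Neumann data into the residual functional is a point the paper also glosses over.
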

\begin{proof}
  By the definitions of $u$ and $u^m$, we have
  \begin{equation}
    \begin{aligned}
      a(u, u-v)       & \leqslant (f,u-v), \  \forall v \in K.         \\
      a(u^m, u^m-v^m) & \leqslant (f,u^m-v^m), \  \forall v^m \in K^m. \\
    \end{aligned}
  \end{equation}
  Adding these inequalities and transposing terms, we obtain
  \[a(u, u) + a(u^m, u^m) \leqslant(f, u - v) + (f, u^m - v^m) + a(u, v) + a(u^m, v^m).\]
  Subtracting $a(u, u^m)+a(u^m, u)$ from both sides and grouping terms, we obtain
  \begin{equation}
    \begin{aligned}
      a(u-u^m, u-u^m) & \leqslant (f, u - v^m) + (f, u^m - v)-a(u,u^m-v)-a(u^m,u-v^m)      \\
                      & = (f, u - v^m) + (f, u^m - v)-a(u,u^m-v)-a(u,u-v^m)+a(u-u^m,u-v^m) \\
                      & = (f-Au, u - v^m) + (f-Au, u^m - v)+a(u-u^m,u-v^m)                 \\
    \end{aligned}
  \end{equation}
  by the definition of $A$ and the fact that $K$ and $K^m \in V$. Since by assumption  $f- Au \in W'$, we have, using the continuity and coercivity of the bilinear form $a(\cdot,\cdot)$,  that
  \[\alpha\norm{u-u^m}_a^2\leqslant \norm{f-Au}_{W'}\norm{u-v^m}_{W}+\norm{f-Au}_{W'}\norm{u^m-v}_{W}+C\norm{u-u^m}_a\norm{u-v^m}_a.\]
  Since
  \[C\norm{u-u^m}_a\norm{u-v^m}_a \leqslant \frac{\alpha}{2}\norm{u-u^m}_a^2+\frac{C^2}{2\alpha}\norm{u-v^m}_a^2\]
  we have
  \[\frac{\alpha}{2}\norm{u-u^m}_a^2\leqslant \norm{f-Au}_{W'}\left[{\norm{u-v^m}_{W}+\norm{u^m-v}_{W}}\right]+\frac{C^2}{2\alpha}\norm{u-v^m}_a^2.\]
  The theorem now foliows from Young's inequality (for positive $a,\ b,\ c$, $a < b + c$ implies that $\sqrt{a}<\sqrt{b}+\sqrt{c}$)

\end{proof}
\begin{remark}
  If $K^m\subset K$, then set $v=u^m$ in \cref{eq:approxiamtion}. Then \cref{eq:approxiamtion} reduces to the following error estimate
  \begin{equation}\label{eq:reduce approxiamtion}
    \norm{u-u^m}_a\leqslant \frac{C}{\alpha}\norm{u-v^m}_a+\left\{\frac{2}{\alpha}\norm{f-Au}_{W'}\norm{u-v^m}_{W}\right\}^{1/2},
  \end{equation}
  for $\forall v^m \in K^m$.
\end{remark}
Since
\begin{equation}
  \norm{u-v^m}_a\leqslant \norm{u-u^\mathup{glo}}_a+\norm{u^\mathup{glo}-v^m}_a, \text{ for } \forall v^m \in K^m,
\end{equation}
we now left to estimate the second term $\norm{u^\mathup{glo}-v^m}_a$. Take $v^m={\cal N}^mp+w_*^m$. Since the range of $\mathcal{G}^\mathup{glo}$ is $V_{k_0,\mathup{ms}}^{\mathup{glo}}$, we are able to find $\phi_* \in L^2(\Omega)$ such that $ w^{\mathup{glo}}=\mathcal{G}^{\mathup{glo}}\phi_*$, then set $w_*^m=\mathcal{G}^m \phi_*$, we derive that
\begin{equation}
  \begin{aligned}
    \norm{u^\mathup{glo}-v^m}_a & \leqslant \norm{(\mathcal{G}^\mathup{glo}-\mathcal{G}^m)\phi_*}_a+\norm{({\cal N}^\mathup{glo}-{\cal N}^m)p}_a. \\
  \end{aligned}
\end{equation}
From \cref{corollary}, we can obtain that
\begin{equation}\label{eq:decay-1}
  \norm{(\mathcal{G}^\mathup{glo}-\mathcal{G}^m)\phi_*}_a^2+\norm{\pi(\mathcal{G}^\mathup{glo}-\mathcal{G}^m)\phi_*}_s^2\leqslant cC_{\mathup{ol}}^2\theta^{m-1}(m+1)^d\norm{\pi \phi_*}_s^2,
\end{equation}
The \cref{eq:G_glo} yields a variational form for $ \mathcal{G}^{\mathup{glo}} $ for all $ v\in V $,
\begin{equation}
  a(w^\mathup{glo},v)+s(\pi w^\mathup{glo},\pi v)=s(\pi \phi_*,\pi v).
\end{equation}
According to \cref{lemma:inverse}, it is easy to find $ \hat{\phi}_*\in V $ such that $ \pi \hat{\phi}_*=\pi \phi_* $ and $ \norm{\hat{\phi}_*}_a\leqslant C_\mathup{inv}\norm{\pi \phi_*}_a $. Replacing $ v $ by $ \hat{\phi}_* $ in the above equation, we have
\begin{equation}
  \norm{\pi \phi_*}_s^2=a(w^\mathup{glo},\hat{\phi}_*)+s(\pi w^\mathup{glo},\pi \phi)\leqslant\norm{\pi\phi_*}_s(C_\mathup{inv}\norm{w^\mathup{glo}}_a+\norm{\pi w^\mathup{glo}}_s),
\end{equation}
Thus,
\begin{equation}\label{eq:estimate}
  \norm{u-v^m}_a\leqslant \frac{1}{\sqrt{\Lambda}}\norm{f}_{s^{-1}}+c\sqrt{C_{\mathup{ol}}}\theta^\frac{m-1}{2}(m+1)^\frac{d}{2}\left\{C_\mathup{tr}\norm{p}_{L^2(\TN)}+C_\mathup{inv}\norm{w^\mathup{glo}}_a+\norm{\pi w^\mathup{glo}}_s\right\}.
\end{equation}



\section{Numerical experiments}\label{sec:num}
In this section, we will present some numerical experiments to demonstrate that the multiscale method proposed is effective in a high contrast coefficient setting. We set the domain $ \Omega = (0,1) ^2 $. The medium parameter $ \kappa $ has a $ 400 \times 400 $ resolution and only takes two values. The matrix phase value is $\kappa_\mathup{m}=1 $ and the value in the channels and inclusions is $\kappa_\mathup{I} \gg \kappa_\mathup{m}$, as shown in \cref{fig:medium}.
\begin{figure}[!ht]
  \centering
  \begin{subfigure}[b]{0.45\textwidth}
    \centering
    \includegraphics[width=\textwidth]{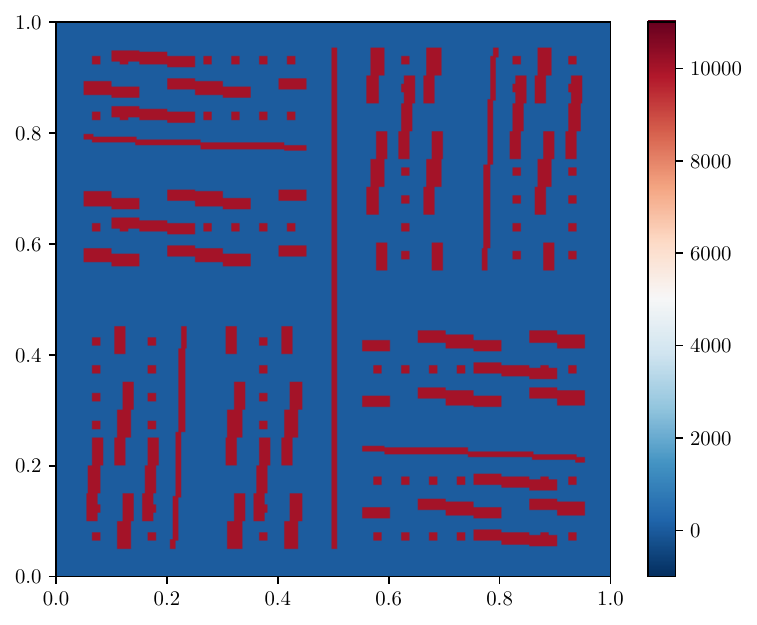}
    \caption{}\label{fig:Medium A}
  \end{subfigure}
  \hfill
  \begin{subfigure}[b]{0.45\textwidth}
    \centering
    \includegraphics[width=\textwidth]{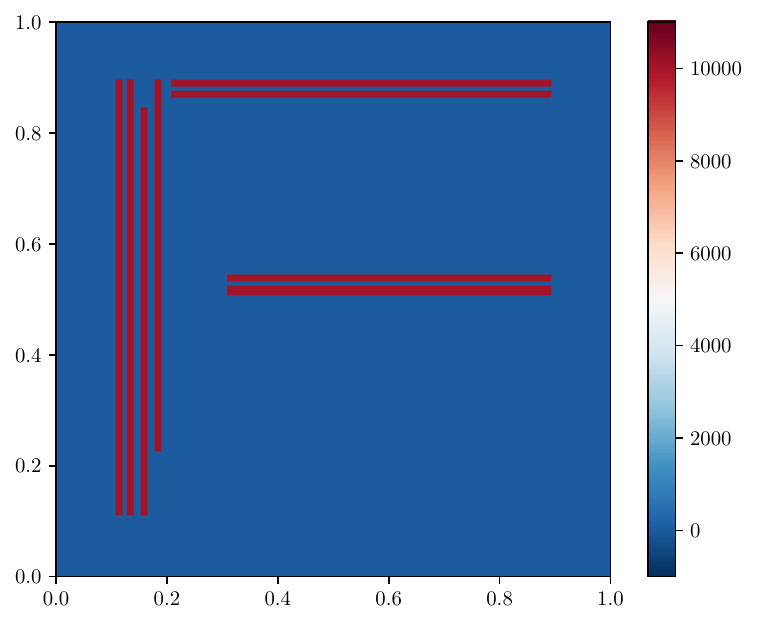}
    \caption{}
    \label{fig:Medium B}
  \end{subfigure}
  \caption{The permeability fields (a) medium A; (b) medium B.}\label{fig:medium}
\end{figure}
We define contrast ratios $ \kappa_\mathup{R} \coloneqq\kappa_\mathup{I}/\kappa_\mathup{m}$. For simplifying the implementation, we especially choose $ \tilde \kappa = 24\kappa /H^2 $ instead of the original definition in \cref{eq:kappa}. Moreover, we always fix the number of eigenvectors used to construct auxiliary space $ V^{\mathup{aux}}_i $ as $l_\mathup{m}$, i.e., $ l_1 = l_2 = \cdot \cdot \cdot = l_N = l_\mathup{m} $. The source functions $ f $ are given by $f_1(x,y)=-2x+3y+\sin (2\pi x)\sin (2\pi y)$ and $f_2(x,y)=\frac{1}{2}-x^2+y^2+\cos(\frac{3}{2}\pi x+\pi y)$ for any $ (x,y)\in\Omega $
in \cref{fig:f}.
\begin{figure}[!ht]
  \centering
  \begin{subfigure}[b]{0.45\textwidth}
    \centering
    \includegraphics[width=\textwidth]{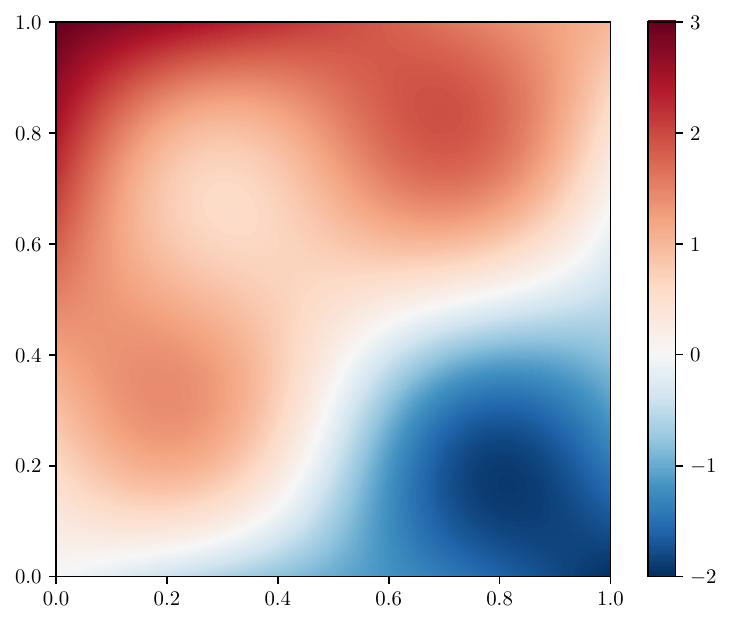}
    \caption{}\label{fig:f_1}
  \end{subfigure}
  \hfill
  \begin{subfigure}[b]{0.45\textwidth}
    \centering
    \includegraphics[width=\textwidth]{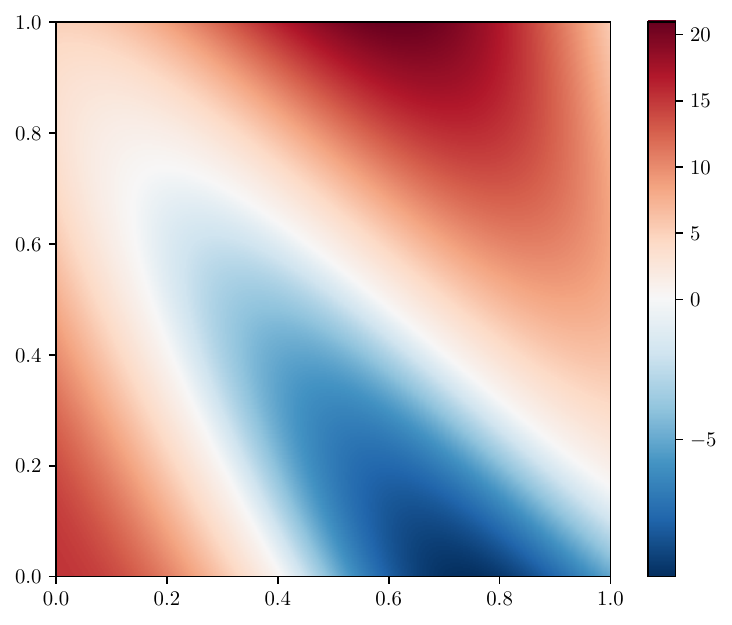}
    \caption{}\label{fig:f_2}
  \end{subfigure}
  \caption{The source function (a) $ f_1 $; (b) $ f_2 $.}\label{fig:f}
\end{figure}
The reference solutions $ u^{\mathup{fe}}$ are obtained by the bilinear Lagrange FEM with a fine mesh $ 400 \times 400 $.

Let us obtain the initial value $ u_0 $ by the unconstrained problem on $ \TC$ i.e., $ \TC $ is 0-Neumann boundary value problem, meanwhile $\mathcal{A}_0=\emptyset$ in \cref{algo}, all $ \TC $ is inactive set. After the $k$-th iteration, to evaluate the efficiency of the multiscale method, we consider the relative $ L^2 $ error and energy error between $ u_{k}^{\mathup{cem}}$ and $u_{k}^{\mathup{fe}} $ defined as
\[ E^L_k\coloneqq \frac{\norm{{u^{\mathup{fe}}_{k}} - u^{\mathup{cem}}_{k}}_{{L^2}(\Omega )}}{\norm{{u^{\mathup{fe}}_{k}}}_{{L^2}(\Omega )}}\ \text{and}\ E^a_k\coloneqq \frac{\norm{{u^{\mathup{fe}}_{k}} - u^{\mathup{cem}}_{k}}_a}{\norm{{u^{\mathup{fe}}_{k}}}_a},\]
and the iterative rate for the multiscale solutions  and the reference solutions in two norms defined as
\[ T^{\mathup{cem},L}_k\coloneqq \frac{\norm{{u^{\mathup{cem}}_{k}} - u^{\mathup{cem}}_*}_{{L^2}(\Omega )}}{\norm{{u^{\mathup{cem}}_{k-1}- u^{\mathup{cem}}_*}}_{{L^2}(\Omega )}}\ \text{and}\ T^{\mathup{cem},a}_k\coloneqq \frac{\norm{{u^{\mathup{cem}}_{k}} - u^{\mathup{cem}}_*}_a}{\norm{{u^{\mathup{cem}}_{k-1}- u^{\mathup{cem}}_*}}_a}.\]

\[ T^{\mathup{fe},L}_k\coloneqq \frac{\norm{{u^{\mathup{fe}}_{k}} - u^{\mathup{fe}}_*}_{{L^2}(\Omega )}}{\norm{{u^{\mathup{fe}}_{k-1}- u^{\mathup{fe}}_*}}_{{L^2}(\Omega )}}\ \text{and}\ T^{\mathup{fe},a}_k\coloneqq \frac{\norm{{u^{\mathup{fe}}_{k}} - u^{\mathup{fe}}_*}_a}{\norm{{u^{\mathup{fe}}_{k-1}- u^{\mathup{fe}}_*}}_a}.\]

\subsection{Model problem 1}
We begin with the model problem 1, where the permeability field is medium A and the source function is $ f_1 $. The default parameters are taken as follows: coarse mesh sizes $ H =1/100 $, constant parameter $ c =10 $, contrast ratios $ \kappa_\mathup{R} = 10^3$, eigenvector numbers $ l_\mathup{m}= 4 $ and oversampling layers $ m=4 $.

The exact multiscale solutions $ u^{\mathup{cem}} $ after iterations with medium A are shown in \cref{fig:u_cem}. \cref{fig:u_3D} provides the three-dimensional visualization of the numerical solution, and \cref{fig:u_con} shows the corresponding contour plot. From the figures, it can be clearly observed that the solution maintains a value of zero on the Dirichlet boundary and remains less than or equal to zero on the contact boundary. This demonstrates that our method remains applicable and maintains high accuracy even under complex high-contrast conditions.
\begin{figure}[!ht]
  \centering
  \begin{subfigure}[b]{0.45\textwidth}
    \centering
    \includegraphics[width=\textwidth]{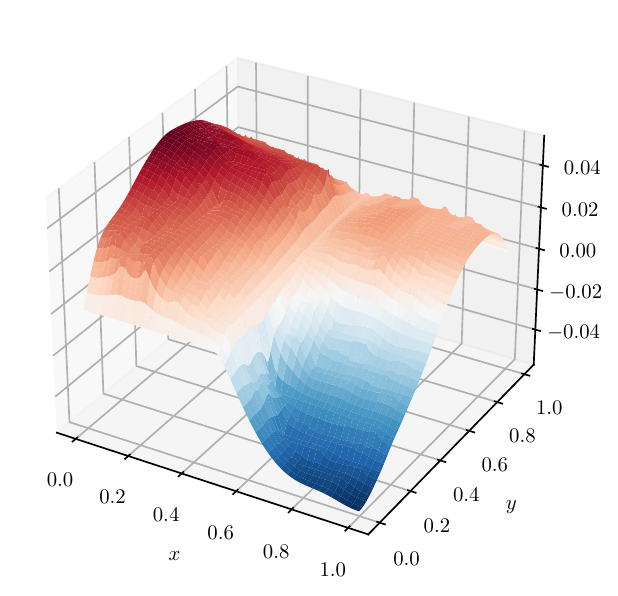}
    \caption{the 3D images}\label{fig:u_3D}
  \end{subfigure}
  \hfill
  \begin{subfigure}[b]{0.45\textwidth}
    \centering
    \includegraphics[width=\textwidth]{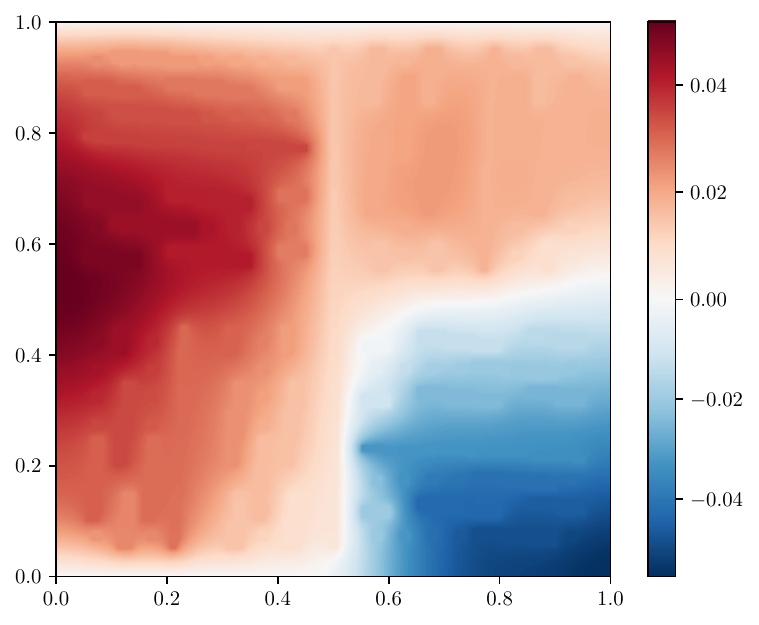}
    \caption{the contour images}\label{fig:u_con}
  \end{subfigure}
  \caption{The multiscale solutions $u^\mathup{cem}$ after iterations with medium A, using the source function $ f_1 $: (a)the 3D images; (b)the contour images; }\label{fig:u_cem}
\end{figure}

The numerical results presented in the \cref{tab:H100m4} demonstrate the convergence behavior and accuracy of the multiscale method. As can be seen from the table, the iterative rate exhibits a monotonic increase in norm values from $k=1$ to 6, followed by a sharp drop to zero at $k=7$, suggesting the iteration terminates after 7 steps when the stopping criterion is met, at which point the active set remains unchanged between consecutive iterations. The iterative rate of multiscale solutions $T_k^{\mathup{cem}}$ and fine-scale solutions $T_k^{\mathup{fe}}$ exhibit nearly identical values for both the $L^2$ norm and energy norm across all iterations, indicating strong agreement between the two methods. The relative $ L^2 $ error and energy error between $ u_{k}^{\mathup{cem}}$ remain remarkably stable, with $ L^2 $ norm errors of order $10^{-6}$ and energy norm errors of order $10^{-3}$, underscoring the method's precision and robustness. These results validate the effectiveness of the proposed method in handling this contact problem while maintaining high accuracy throughout the iterations.

\begin{table}[!ht]
  \caption{Convergence rate and relative error $H=1/100$ and $m=4$}\label{tab:H100m4}
  \centering
  \makegapedcells
  \footnotesize{
    \begin{tabular}{c l c c c c c c c}
      \toprule
      \multicolumn{2}{c}{$ k $ } & \num{1}              & \num{2}           & \num{3}           & \num{4}           & \num{5}           & \num{6}           & \num{7}                               \\

      \toprule
      \multirow{2}{*}{$T_k^\mathup{cem}$}
                                 & $\norm{\cdot}_{L^2}$ & \num{6.54683e-02} & \num{1.51615e-01} & \num{1.76431e-01} & \num{2.53363e-01} & \num{2.81768e-01} & \num{3.16505e-01} & \num{0.00000e+00} \\
                                 & $\norm{\cdot}_{{a}}$ & \num{1.89735e-01} & \num{2.46969e-01} & \num{2.64997e-01} & \num{3.52091e-01} & \num{3.83587e-01} & \num{4.20175e-01} & \num{0.00000e+00} \\

      \midrule
      \multirow{2}{*}{$T_k^\mathup{fe}$}
                                 & $\norm{\cdot}_{L^2}$ & \num{6.54685e-02} & \num{1.51615e-01} & \num{1.76431e-01} & \num{2.53362e-01} & \num{2.81767e-01} & \num{3.16502e-01} & \num{0.00000e+00} \\
                                 & $\norm{\cdot}_{{a}}$ & \num{1.89736e-01} & \num{2.46968e-01} & \num{2.64997e-01} & \num{3.52091e-01} & \num{3.83581e-01} & \num{4.20167e-01} & \num{0.00000e+00} \\
      \midrule
      \multirow{2}{*}{$E_k$}
                                 & $\norm{\cdot}_{L^2}$ & \num{4.10175e-06} & \num{4.06605e-06} & \num{4.06417e-06} & \num{4.06417e-06} & \num{4.06423e-06} & \num{4.06427e-06} & \num{4.06430e-06} \\
                                 & $\norm{\cdot}_{{a}}$ & \num{1.08520e-03} & \num{1.08388e-03} & \num{1.08344e-03} & \num{1.08335e-03} & \num{1.08332e-03} & \num{1.08332e-03} & \num{1.08331e-03} \\
      \bottomrule
    \end{tabular}
  }
\end{table}

In order to better study the behaviour of the numerical solution $ u^{\mathup{cem}} $ and the Lagrange multiplier $ \lambda $ on the contact boundary, we show in \cref{fig:TC} the iterative process of the multiscale solution $ u_k^{\mathup{cem}} $ and $ \lambda $ on the boundary $ \TC $ in medium A. The initial solution $ u_0 $ is obtained from an unconstrained problem on $ \TC $ with values ranging from -0.04 to 0.10 on the contact boundary. After the first iteration step, we can see that $ u_k^{\mathup{cem}} $ remains less than or equal to 0 at all times and begins to satisfy the constraints, and the range of x-values corresponding to less than 0 increases with the number of iterations and gradually converges to the result of $ k = 7 $. In \cref{fig:lambda}, the value of $ \lambda_k $ oscillates between positive and negative values, eventually converging to $ \lambda_7\geqslant0 $. Meanwhile the results of the 7th iteration were able to show that at the contact boundary $ \lambda u=0 $, the KKT constraints on the boundary were met. Once again, the validity and accuracy of the multiscale method is demonstrated.

\begin{figure}[!ht]
  \centering
  \begin{subfigure}[b]{0.49\textwidth}
    \centering
    \includegraphics[width=\textwidth]{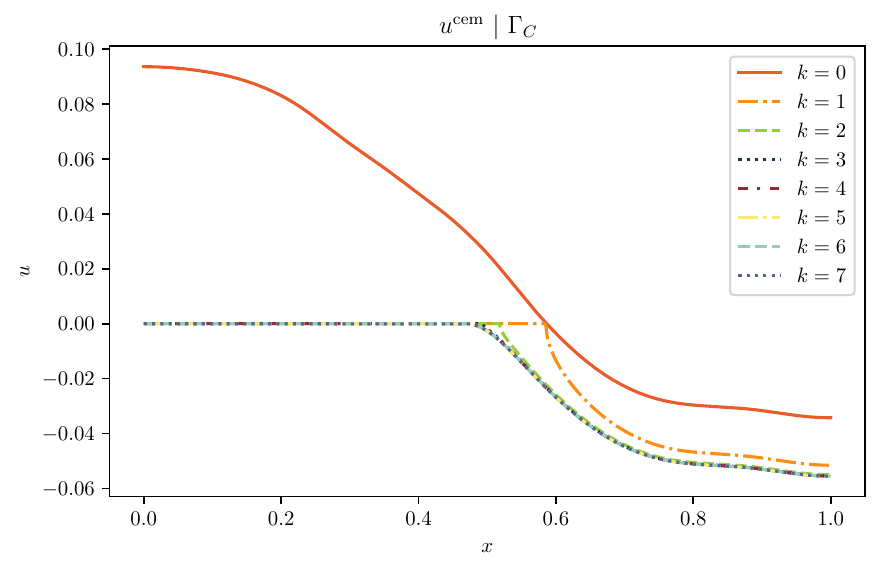}
    \caption{The multiscale solutions $u_k^\mathup{cem}$}\label{fig:u_TC}
  \end{subfigure}
  \hfill
  \begin{subfigure}[b]{0.49\textwidth}
    \centering
    \includegraphics[width=\textwidth]{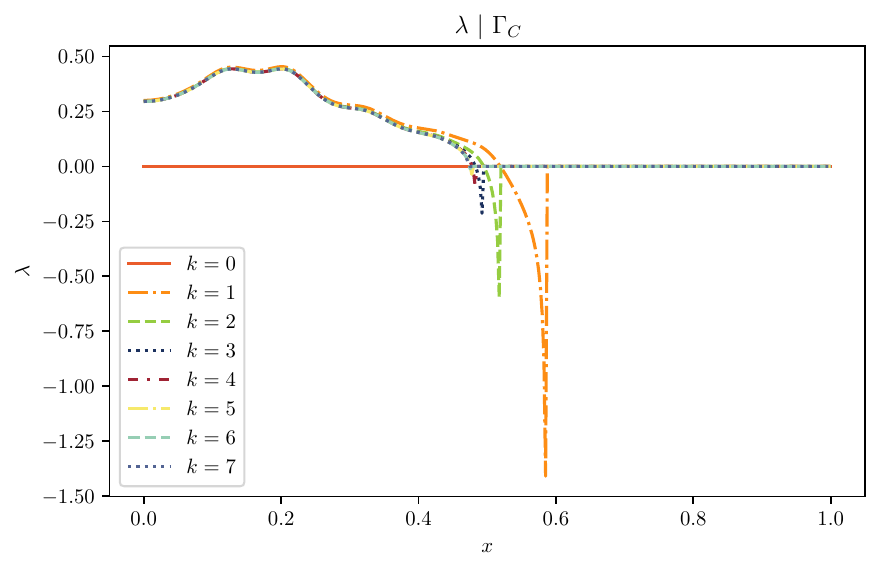}
    \caption{The lagrangian multiplier $\lambda_k$}\label{fig:lambda}
  \end{subfigure}
  \caption{The multiscale solutions $u_k^\mathup{cem}$ and $\lambda_k$ along $\TC$ with medium A and source function $ f_1 $}\label{fig:TC}
\end{figure}

The active sets $ \mathcal{A}_k $ and inactive sets $ \mathcal{I}_k $ are shown in \cref{fig:set}. 1 and 0 represent yes or no active set/inactive set, respectively. From the figure, we can see that the active set gradually decreases during the iteration process, and finally stays unchanged in the 7th iteration, and the iteration is terminated. At the same time, the inactive set gradually increases. Together they form the set $\TC$.

\begin{figure}[!ht]
  \centering
  \begin{subfigure}[b]{0.49\textwidth}
    \centering
    \includegraphics[width=\textwidth]{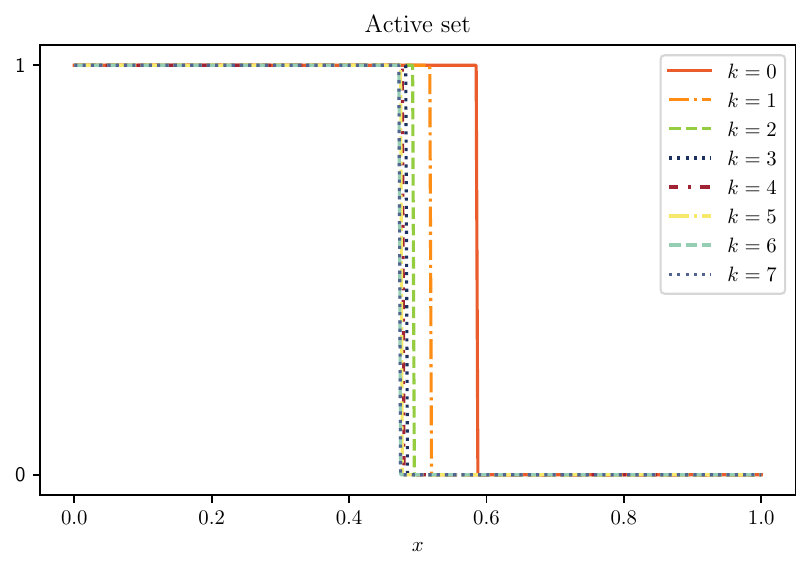}
    \caption{}\label{fig:act}
  \end{subfigure}
  \hfill
  \begin{subfigure}[b]{0.49\textwidth}
    \centering
    \includegraphics[width=\textwidth]{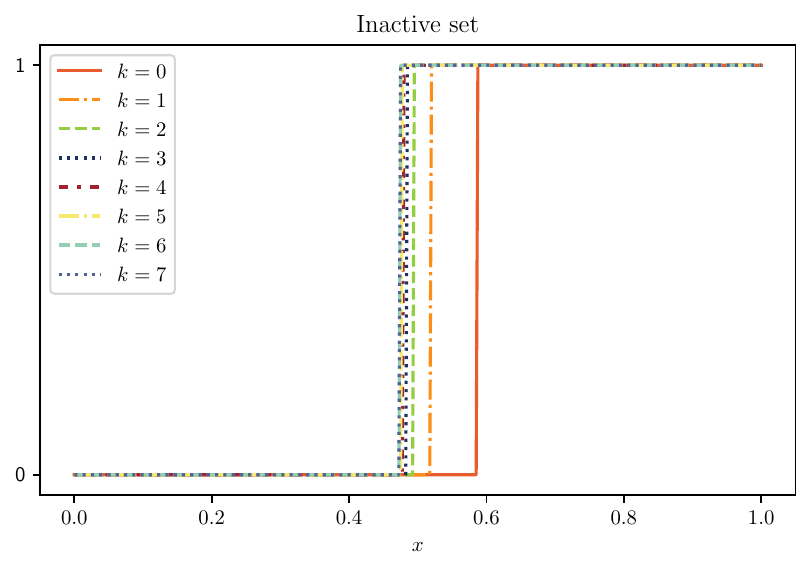}
    \caption{}\label{fig:inact}
  \end{subfigure}
  \caption{The active sets $\mathcal{A}_k$ and inactive sets $\mathcal{I}_k$}\label{fig:set}
\end{figure}

\cref{tab:Hm4} presents numerical results for different coarse mesh sizes $H$ across iterations $k = 1$ to 7, comparing relative errors in the $L^2$-norm and energy norm. As coarse mesh get finer, the relative errors in both norms decrease, with the $L^2$-norm error dropping from $3.286\times10^{-4}$ ($H=1/20$) to $4.102\times10^{-6}$ ($H=1/100$) and the energy norm error decreasing  from $9.339\times10^{-3}$($H=1/20$) to $1.085\times10^{-3}$ ($H=1/100$). We can also observe that for all coarse mesh sizes $H$, errors stabilize after the third iteration. These results demonstrate that the multiscale method is effective in solving the model problem 1, with high accuracy across all iterations.

\begin{table}[!ht]
  \caption{The relative error $E_k$ with different coarse mesh size $H$}\label{tab:Hm4}
  \centering
  \makegapedcells
  \footnotesize{
    \begin{tabular}{c l c c c c c c c}
      \toprule
      \multicolumn{2}{c}{$ k $ } & \num{1}              & \num{2}           & \num{3}           & \num{4}           & \num{5}           & \num{6}           & \num{7}                               \\

      \toprule

      \multirow{2}{*}{$H=1/20$}
                                 & $\norm{\cdot}_{L^2}$ & \num{3.28648e-04} & \num{3.26119e-04} & \num{6.23599e-04} & \num{3.26064e-04} & \num{3.26070e-04} & \num{3.26072e-04} & \num{3.26073e-04} \\

                                 & $\norm{\cdot}_{{a}}$ & \num{9.33881e-03} & \num{9.27993e-03} & \num{1.05797e-02} & \num{9.27624e-03} & \num{9.27621e-03} & \num{9.27621e-03} & \num{9.27621e-03} \\
      \midrule
      \multirow{2}{*}{$H=1/40$}
                                 & $\norm{\cdot}_{L^2}$ & \num{8.21180e-05} & \num{8.14854e-05} & \num{8.14661e-05} & \num{8.14688e-05} & \num{8.14700e-05} & \num{8.14704e-05} & \num{8.14706e-05} \\
                                 & $\norm{\cdot}_{{a}}$ & \num{3.91100e-03} & \num{3.88633e-03} & \num{3.88484e-03} & \num{3.88474e-03} & \num{3.88473e-03} & \num{3.88473e-03} & \num{3.88473e-03} \\
      \midrule
      \multirow{2}{*}{$H=1/80$}
                                 & $\norm{\cdot}_{L^2}$ & \num{1.26706e-05} & \num{1.25743e-05} & \num{1.25713e-05} & \num{1.25717e-05} & \num{1.25719e-05} & \num{1.25720e-05} & \num{1.25720e-05} \\
                                 & $\norm{\cdot}_{{a}}$ & \num{1.29044e-03} & \num{1.28415e-03} & \num{1.28364e-03} & \num{1.28358e-03} & \num{1.28357e-03} & \num{1.28357e-03} & \num{1.28357e-03} \\
      \midrule
      \multirow{2}{*}{$H=1/100$}
                                 & $\norm{\cdot}_{L^2}$ & \num{4.10175e-06} & \num{4.06605e-06} & \num{4.06417e-06} & \num{4.06417e-06} & \num{4.06423e-06} & \num{4.06427e-06} & \num{4.06430e-06} \\
                                 & $\norm{\cdot}_{{a}}$ & \num{1.08520e-03} & \num{1.08388e-03} & \num{1.08344e-03} & \num{1.08335e-03} & \num{1.08332e-03} & \num{1.08332e-03} & \num{1.08331e-03} \\
      \bottomrule
    \end{tabular}
  }
\end{table}

In \cref{tab: H20m}, we present the numerical results for different oversampling layers $m$ with a coarse mesh size of $H=1/20$ and different iteration steps. The results indicate that as the number of oversampling layers increases, the relative errors in both the $L^2$-norm and energy norm decrease. At seventh iteration, $L^2$-norm error drop from $5.545\times 10^{-3}$ to $3.26\times 10^{-4}$ and for energy norm, errors decline from $7.233\times 10^{-3}$ to $9.276\times 10^{-4}$ when oversampling layers $m$ increase from 2 to 4. This demonstrates that increasing the number of oversampling layers enhances the accuracy of the multiscale method.
\begin{table}[!ht]
  \caption{The relative error $E_k$ with different oversampled layers $m$}\label{tab: H20m}
  \centering
  \makegapedcells
  \footnotesize{
    \begin{tabular}{c l c c c c c c c}
      \toprule
      \multicolumn{2}{c}{$ k $ } & \num{1}              & \num{2}           & \num{3}           & \num{4}           & \num{5}           & \num{6}           & \num{7}                               \\

      \toprule



      \multirow{2}{*}{$m=2$}
                                 & $\norm{\cdot}_{L^2}$ & \num{5.76504e-03} & \num{5.56512e-03} & \num{5.54421e-03} & \num{5.54305e-03} & \num{5.54379e-03} & \num{5.54447e-03} & \num{5.54535e-03} \\

                                 & $\norm{\cdot}_{{a}}$ & \num{7.25422e-02} & \num{7.23876e-02} & \num{7.23426e-02} & \num{7.23333e-02} & \num{7.23308e-02} & \num{7.23301e-02} & \num{7.23298e-02} \\
      \midrule
      \multirow{2}{*}{$m=3$}
                                 & $\norm{\cdot}_{L^2}$ & \num{3.28864e-04} & \num{3.26311e-04} & \num{6.22516e-04} & \num{3.26254e-04} & \num{3.26260e-04} & \num{3.26262e-04} & \num{3.26263e-04} \\

                                 & $\norm{\cdot}_{{a}}$ & \num{9.52300e-03} & \num{9.46179e-03} & \num{1.07391e-02} & \num{9.45743e-03} & \num{9.45737e-03} & \num{9.45736e-03} & \num{9.45736e-03} \\
      \midrule
      \multirow{2}{*}{$m=4$}
                                 & $\norm{\cdot}_{L^2}$ & \num{3.28648e-04} & \num{3.26119e-04} & \num{6.23599e-04} & \num{3.26064e-04} & \num{3.26070e-04} & \num{3.26072e-04} & \num{3.26073e-04} \\

                                 & $\norm{\cdot}_{{a}}$ & \num{9.33881e-03} & \num{9.27993e-03} & \num{1.05797e-02} & \num{9.27624e-03} & \num{9.27621e-03} & \num{9.27621e-03} & \num{9.27621e-03} \\
      \bottomrule
    \end{tabular}
  }
\end{table}

\subsection{Model problem 2}
We then consider model problem 2, where the permeability field is medium B and the source function is $ f_2 $. The default parameters are taken as follows: coarse mesh sizes $ H =1/100 $, constant parameter $ c =10 $, contrast ratios $ \kappa_\mathup{R} = 10^3$, eigenvector numbers $ l_\mathup{m}= 4 $ and oversampling layers $ m=4 $.

\cref{fig:u2_cem} show the contour images and 3D images for the numerical solution $ u^\mathup{cem} $. We can see that the multiscale solution $ u^\mathup{cem} $ is ranged from -0.02 to 0.05, and exhibits characteristics less than or equal to 0 on the contact boundary, which is consistent with the physical context of the problem.

\begin{figure}[!ht]
  \centering
  \begin{subfigure}[b]{0.45\textwidth}
    \centering
    \includegraphics[width=\textwidth]{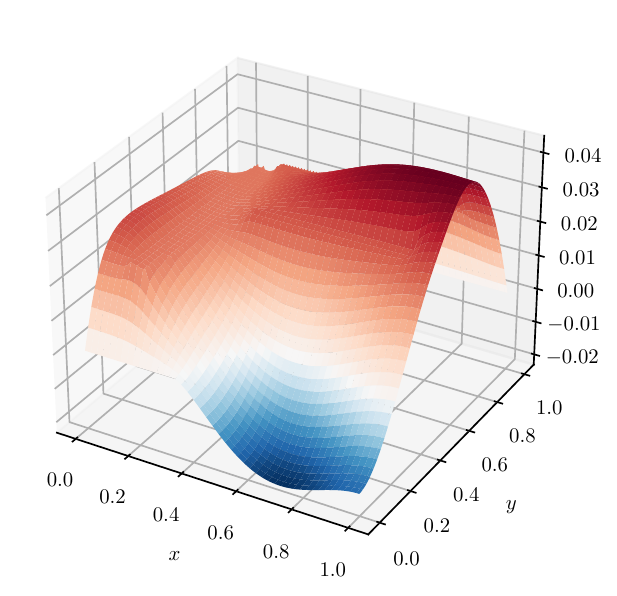}
    \caption{the 3D images}\label{fig:u2_3D}
  \end{subfigure}
  \hfill
  \begin{subfigure}[b]{0.45\textwidth}
    \centering
    \includegraphics[width=\textwidth]{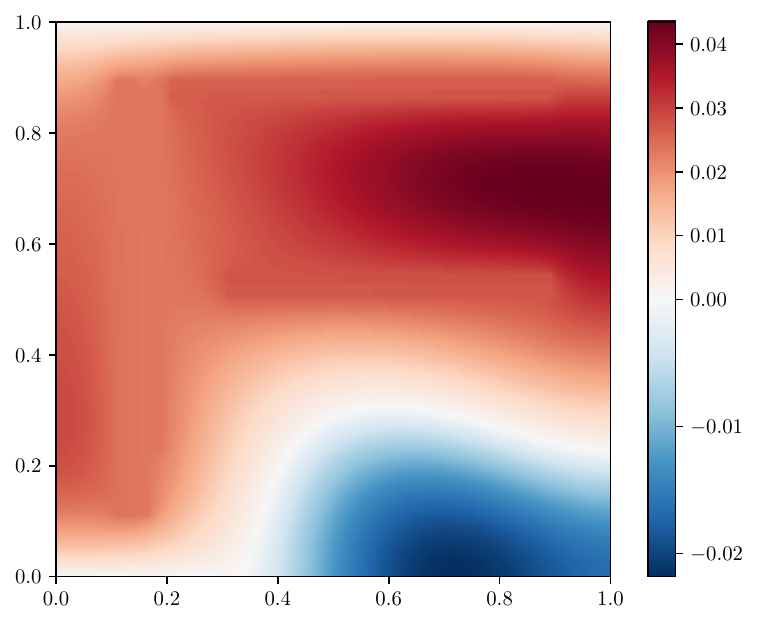}
    \caption{the contour images}\label{fig:u2_con}
  \end{subfigure}
  \caption{The multiscale solutions $u^\mathup{cem}$ after iterations with medium B, using the source function $ f_2 $: (a)the 3D images; (b)the contour images; }\label{fig:u2_cem}
\end{figure}

In \cref{tab:Convergence rate and relative error_model2}, we present the convergence rate and relative error for model problem 2. After seven iterations, the convergence rate converged to zero, indicating that the iteration met the termination condition. These convergence rates did not exhibit a monotonically decreasing trend over the interval $k=1$ to $k=6$, but rather fluctuated. This indicates that the convergence process may be influenced by mesh variations, nonlinear effects, or the multiscale nature of the problem, necessitating further experimental analysis. Under both $L^2$ norm and energy norm, the values of $T_k^\mathrm{cem}$ and $T_k^\mathrm{fe}$ are remarkably close, demonstrating that the CEM-GMsFEM method effectively approximates the finite element solution. The relative error $E_k$ diminishes as the iteration count increases. By the seventh iteration, the $L^2$ norm error had decreased to $1.29676\times 10^{-3}$, whilst the energy norm error had reduced to $2.47525\times 10^{-2}$. This demonstrates that our method effectively captures the solution behaviour near the contact boundary, leading to improved accuracy with each iteration.

\begin{table}[!ht]
  \caption{Convergence rate and relative error for model problem 2}\label{tab:Convergence rate and relative error_model2}
  \centering
  \makegapedcells
  \footnotesize{
    \begin{tabular}{c l c c c c c c c}
      \toprule
      \multicolumn{2}{c}{$ k $ } & \num{1}              & \num{2}           & \num{3}           & \num{4}           & \num{5}           & \num{6}           & \num{7}                               \\

      \toprule
      \multirow{2}{*}{$T_k^\mathup{cem}$}
                                 & $\norm{\cdot}_{L^2}$ & \num{2.13387e-01} & \num{1.81882e-01} & \num{2.16340e-01} & \num{2.35861e-01} & \num{2.08352e-01} & \num{1.53029e-01} & \num{0.00000e+00} \\
                                 & $\norm{\cdot}_{{a}}$ & \num{2.60552e-01} & \num{3.05200e-01} & \num{3.25028e-01} & \num{3.41410e-01} & \num{3.08921e-01} & \num{2.42035e-01} & \num{0.00000e+00} \\

      \midrule
      \multirow{2}{*}{$T_k^\mathup{fe}$}
                                 & $\norm{\cdot}_{L^2}$ & \num{2.13432e-01} & \num{1.81357e-01} & \num{2.15024e-01} & \num{2.33073e-01} & \num{2.03011e-01} & \num{1.43760e-01} & \num{0.00000e+00} \\
                                 & $\norm{\cdot}_{{a}}$  & \num{2.60374e-01} & \num{3.04536e-01} & \num{3.23491e-01} & \num{3.38204e-01} & \num{3.02012e-01} & \num{2.25308e-01} & \num{0.00000e+00} \\
      \midrule
      \multirow{2}{*}{$E_k$}
                                 & $\norm{\cdot}_{L^2}$  & \num{1.29676e-03} & \num{1.41421e-03} & \num{1.46220e-03} & \num{1.48521e-03} & \num{1.49745e-03} & \num{1.50378e-03} & \num{1.50700e-03} \\ 
                                 & $\norm{\cdot}_{{a}}$  & \num{2.62084e-02} & \num{2.50178e-02} & \num{2.48100e-02} & \num{2.47660e-02} & \num{2.47553e-02} & \num{2.47529e-02} & \num{2.47525e-02} \\
      \bottomrule
    \end{tabular}
  }
\end{table}

The multiscale solutions $ u_k^\mathup{cem} $ and Lagrangian multiplier $\lambda_k$ along $\TC$ are shown in \cref{fig:TC2}. The initial value $ u_0^\mathup{cem} $ on the boundary exhibits non-zero components, as it is computed from the unconstrained problem on the contact boundary.
As the number of iterations increases, $ u_k^\mathup{cem} $ on the boundary gradually converges to the result at the termination of the seventh iteration, meaning the value on the contact boundary rapidly converges to a result less than or equal to zero. The solution of $\lambda_k$ exhibits some oscillation during the iteration process, but eventually converges to a result greater than or equal to zero. We can also observe that the solutions of $ u_k^\mathup{cem} $ and $\lambda_k$ satisfy the complementary slackness condition.
This reflects the consistency between theory and experiment, and demonstrates the accuracy of the numerical experiments.

\begin{figure}[!ht]
  \centering
  \begin{subfigure}[b]{0.49\textwidth}
    \centering
    \includegraphics[width=\textwidth]{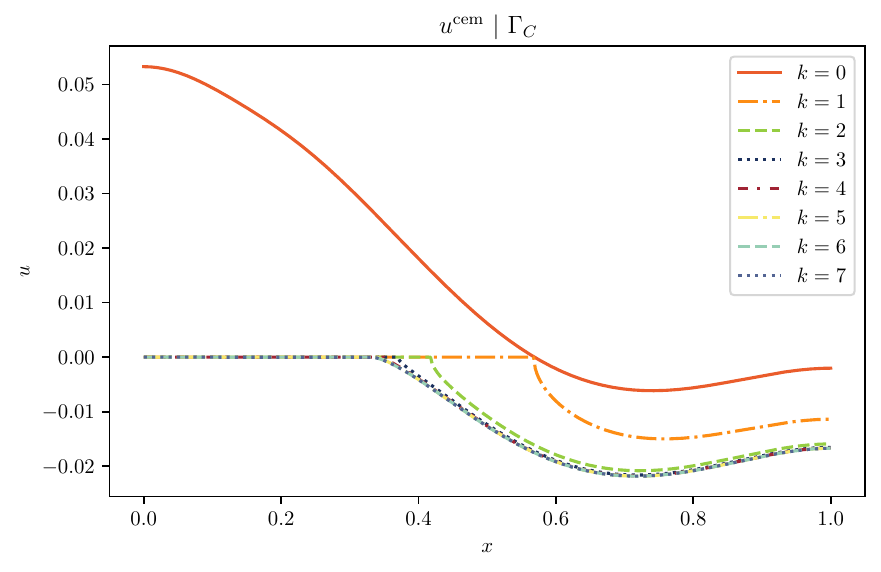}
    \caption{The multiscale solutions $u_k^\mathup{cem}$}\label{fig:u2_TC}
  \end{subfigure}
  \hfill
  \begin{subfigure}[b]{0.49\textwidth}
    \centering
    \includegraphics[width=\textwidth]{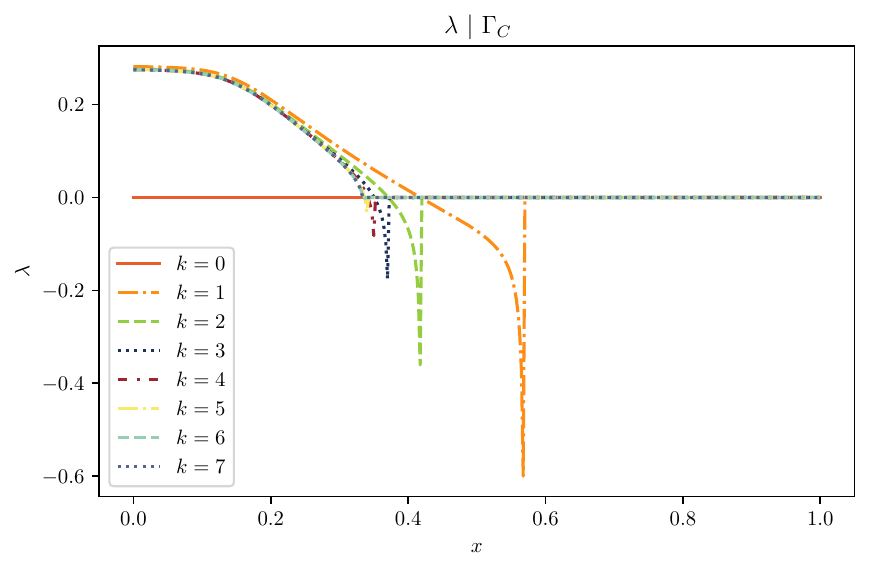}
    \caption{The lagrangian multiplier $\lambda_k$}\label{fig:lambda2}
  \end{subfigure}
  \caption{The multiscale solutions $u_k^\mathup{cem}$ and $\lambda_k$ along $\TC$ with medium B and source function $ f_2 $.}\label{fig:TC2}
\end{figure}

The results for active set $\mathcal{A}_k$ and inactive set $\mathcal{I}_k$ are shown in \cref{fig:set2}. From the figure, we can see that the active set gradually decreases during the iteration process, and finally stays unchanged in the 7th iteration, and the iteration is terminated. At the same time, the inactive set gradually increases. Both exhibit monotonic convergence properties. 

\begin{figure}[!ht]
  \centering
  \begin{subfigure}[b]{0.49\textwidth}
    \centering
    \includegraphics[width=\textwidth]{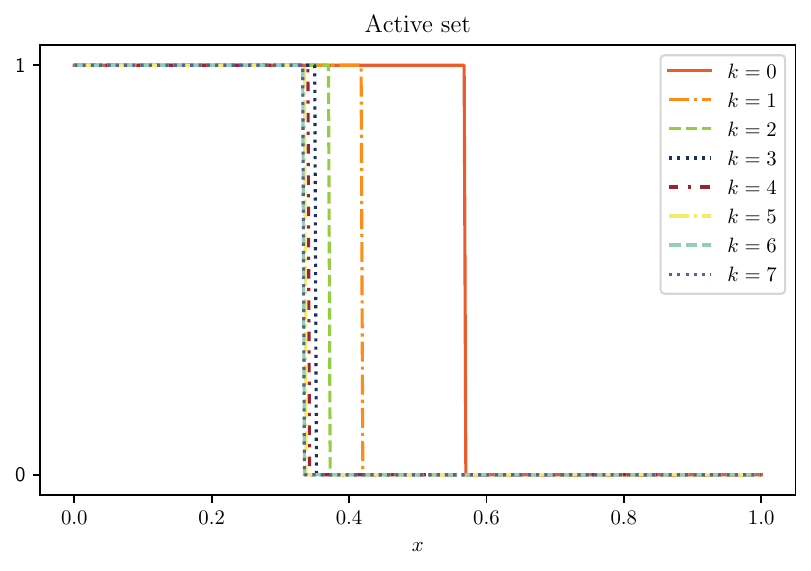}
    \caption{}\label{fig:act2}
  \end{subfigure}
  \hfill
  \begin{subfigure}[b]{0.49\textwidth}
    \centering
    \includegraphics[width=\textwidth]{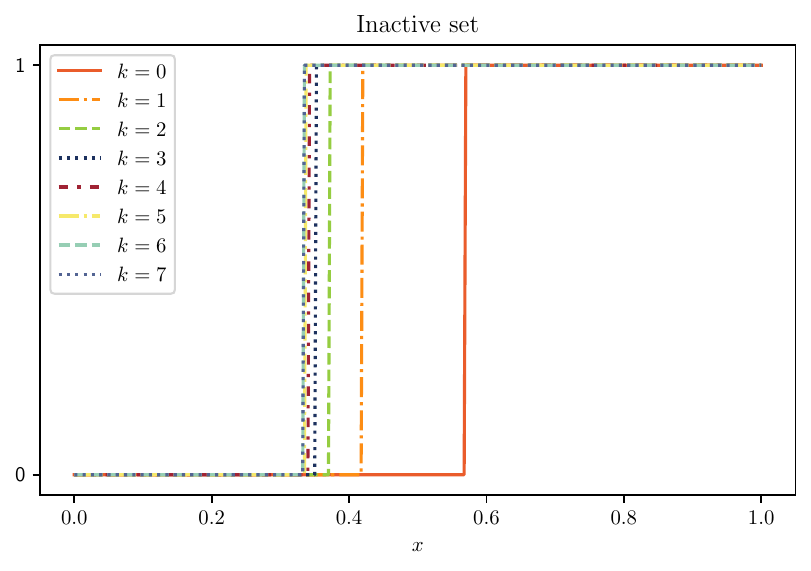}
    \caption{}\label{fig:inact2}
  \end{subfigure}
  \caption{The active sets $\mathcal{A}_k$ and inactive sets $\mathcal{I}_k$}\label{fig:set2}
\end{figure}

\cref{tab:li} The table illustrates the relative error between finite element and numerical solutions, measured under $L^2$ norm and energy norm, with results segmented by the number of eigenvectors used. Overall, the number of eigenvectors is the key factor determining the magnitude of error. When $l_\mathup{m}=1$, the errors under both norm conditions are the largest, reaching approximately 0.78 and 0.69 respectively. As $l_\mathup{m}$ increases to 2, the error decreases significantly by about one order of magnitude. When $l_\mathup{m}$ is further increased to 3 and 4, the error drops to its lowest level, stabilising at $10^{-3}$($L^2$ norm) and $10^{-2}$(energy norm) respectively. Moreover, it is noteworthy that the error values for
$l_\mathup{m}=3$ and $l_\mathup{m}=4$ are remarkably close, indicating that employing three eigenvectors may suffice to meet the precision requirements for this problem.

\begin{table}[!ht]
  \caption{The relative error $E_k$ with different eigenvector numbers $ l_\mathup{m}$}\label{tab:li}
  \centering
  \makegapedcells
  \footnotesize{
    \begin{tabular}{c l c c c c c c c}
      \toprule
      \multicolumn{2}{c}{$ k $ } & \num{1}              & \num{2}           & \num{3}           & \num{4}           & \num{5}           & \num{6}           & \num{7}                               \\

      \toprule

      \multirow{2}{*}{$l_\mathup{m}= 1 $}
                                 & $\norm{\cdot}_{L^2}$ & \num{7.81916e-01} & \num{8.25004e-01} & \num{8.24420e-01} & \num{8.23986e-01} & \num{8.23791e-01} & \num{8.23734e-01} & \num{8.23728e-01} \\ 

                                 & $\norm{\cdot}_{{a}}$ & \num{6.93197e-01} & \num{6.53373e-01} & \num{6.43271e-01} & \num{6.40902e-01} & \num{6.40364e-01} & \num{6.40259e-01} & \num{6.40237e-01} \\ 
      \midrule
      \multirow{2}{*}{$l_\mathup{m}= 2 $}
                                 & $\norm{\cdot}_{L^2}$ & \num{5.55691e-02} & \num{6.34475e-02} & \num{6.40501e-02} & \num{6.45703e-02} & \num{6.45415e-02} & \num{6.45086e-02} & \num{6.44731e-02} \\
                                 & $\norm{\cdot}_{{a}}$ & \num{1.74067e-01} & \num{1.67151e-01} & \num{1.65075e-01} & \num{1.64623e-01} & \num{1.64418e-01} & \num{1.64564e-01} & \num{1.64494e-01} \\ 
      \midrule
      \multirow{2}{*}{$l_\mathup{m}= 3 $}
                                 & $\norm{\cdot}_{L^2}$ & \num{1.29839e-03} & \num{1.41614e-03} & \num{1.46423e-03} & \num{1.48728e-03} & \num{1.49955e-03} & \num{1.50588e-03} & \num{1.50910e-03} \\  
                                 & $\norm{\cdot}_{{a}}$ & \num{2.62269e-02} & \num{2.50357e-02} & \num{2.48276e-02} & \num{2.47837e-02} & \num{2.47730e-02} & \num{2.47706e-02} & \num{2.47702e-02} \\  
      \midrule
      \multirow{2}{*}{$l_\mathup{m}= 4 $}
                                 & $\norm{\cdot}_{L^2}$  & \num{1.29676e-03} & \num{1.41421e-03} & \num{1.46220e-03} & \num{1.48521e-03} & \num{1.49745e-03} & \num{1.50378e-03} & \num{1.50700e-03} \\ 
                                 & $\norm{\cdot}_{{a}}$  & \num{2.62084e-02} & \num{2.50178e-02} & \num{2.48100e-02} & \num{2.47660e-02} & \num{2.47553e-02} & \num{2.47529e-02} & \num{2.47525e-02} \\
      \bottomrule
    \end{tabular}
  }
\end{table}

\cref{tab:kapppa} presents and compares the relative error $E_k$ with respect to the iteration count $k$. Overall, increasing the contrast ratio $\kappa_\mathup{R}$ leads to a significant rise in computational error for both norm metrics. When $\kappa_\mathup{R}$ increases from $10^1$ to $10^4$, the $L^2$ norm error increases from approximately $10^{-5}$ to $10^{-2}$ order of magnitude, while the energy norm error also increased from approximately $10^{-3}$ to $10^{-2}$ order of magnitude, indicating that high contrast poses a significant challenge to computational accuracy. Although the relative error in the energy norm diminishes with increasing iterations, the overall error level remains substantial, particularly under high-contrast conditions. This indicates that finer meshes and additional basis functions may be required to enhance computational accuracy when handling high-contrast permeability fields. Whilst the relative error exhibits greater magnitude under high-contrast condition, it consistently remains within a low order of magnitude ($10^{-2}$). This demonstrates the efficiency and robustness of our approach in handling high-contrast problems.

\begin{table}[!ht]
  \caption{The relative error $E_k$ with different contrast ratio $\kappa_\mathup{R}$}\label{tab:kapppa}
  \centering
  \makegapedcells
  \footnotesize{
    \begin{tabular}{c l c c c c c c c}
      \toprule
      \multicolumn{2}{c}{$ k $ } & \num{1}              & \num{2}           & \num{3}           & \num{4}           & \num{5}           & \num{6}           & \num{7}                               \\

      \toprule

      \multirow{2}{*}{$\kappa_\mathup{R} = 10^1$}
                                 & $\norm{\cdot}_{L^2}$ & \num{5.43394e-05} & \num{5.72611e-05} & \num{6.32962e-05} & \num{6.40563e-05} & \num{6.44618e-05} & \num{6.47259e-05} & \num{6.48176e-05} \\ 

                                 & $\norm{\cdot}_{{a}}$  & \num{5.06876e-03} & \num{4.82272e-03} & \num{4.65099e-03} & \num{4.63913e-03} & \num{4.63646e-03} & \num{4.63575e-03} & \num{4.63566e-03} \\ 
      \midrule
      \multirow{2}{*}{$\kappa_\mathup{R} = 10^2$}
                                 & $\norm{\cdot}_{L^2}$ & \num{1.46723e-04} & \num{1.72031e-04} & \num{1.77728e-04} & \num{1.80903e-04} & \num{1.82306e-04} & \num{1.83038e-04} & \num{1.83412e-04} \\ 
                                 & $\norm{\cdot}_{{a}}$ & \num{9.27815e-03} & \num{8.60485e-03} & \num{8.51492e-03} & \num{8.49431e-03} & \num{8.49030e-03} & \num{8.48926e-03} & \num{8.48900e-03} \\ 
      \midrule
      \multirow{2}{*}{$\kappa_\mathup{R} = 10^3$}
                                 & $\norm{\cdot}_{L^2}$  & \num{1.29676e-03} & \num{1.41421e-03} & \num{1.46220e-03} & \num{1.48521e-03} & \num{1.49745e-03} & \num{1.50378e-03} & \num{1.50700e-03} \\ 
                                 & $\norm{\cdot}_{{a}}$  & \num{2.62084e-02} & \num{2.50178e-02} & \num{2.48100e-02} & \num{2.47660e-02} & \num{2.47553e-02} & \num{2.47529e-02} & \num{2.47525e-02} \\
      \midrule
      \multirow{2}{*}{$\kappa_\mathup{R} = 10^4$}
                                 & $\norm{\cdot}_{L^2}$ & \num{1.25675e-02} & \num{1.37042e-02} & \num{1.46558e-02} & \num{1.46234e-02} & \num{1.46035e-02} & \num{1.45929e-02} & \num{1.45929e-02} \\  
                                 & $\norm{\cdot}_{{a}}$ & \num{8.16963e-02} & \num{7.80578e-02} & \num{7.76199e-02} & \num{7.73486e-02} & \num{7.72830e-02} & \num{7.72699e-02} & \num{7.72699e-02}\\ 
      \bottomrule
    \end{tabular}
  }
\end{table}

\section{Conclusion}
We have developed an iterative multiscale method that seamlessly combines CEM-GMsFEM with a primal-dual active set strategy to tackle high-contrast contact problems.
The proposed scheme builds an auxiliary space by solving local spectral problems and constructs multiscale basis functions through localized energy minimization on oversampled domains.
This approach yields a reduced order model that is robust with respect to high heterogeneities and effectively enforces the non-penetration condition along the contact boundary.
Our theoretical analysis confirms finite step convergence and provides error estimates that account for the contrast ratio and the coarse mesh size.
Numerical experiments validate the method's accuracy and demonstrate its ability to capture the essential features of the solution, with both the multiscale and fine-scale solutions exhibiting consistent convergence behavior.
Future work will focus on extending the framework to dynamic contact problems and exploring parallel implementations to further enhance computational efficiency.

\section*{Acknowledgment}

The research of Eric Chung is partially supported by the Hong Kong RGC General Research Fund (Project: 14305222).

\bibliographystyle{plain}
\bibliography{cemgmsfem.bib}

\begin{thebibliography}{10}

\bibitem{Abdulle2012}
Assyr Abdulle, Weinan E, Bj{\"o}rn Engquist, and Eric Vanden-Eijnden.
\newblock The heterogeneous multiscale method.
\newblock {\em Acta Numerica}, 21:1--87, 2012.

\bibitem{Ballard2024}
Patrick Ballard and Flaviana Iurlano.
\newblock Existence results for the time-incremental elastic contact problem with {Coulomb} friction in {2D}.
\newblock {\em Mathematical Models and Methods in Applied Sciences}, 34(12):2217--2263, September 2024.

\bibitem{Chen2003}
Zhiming Chen and Thomas~Y. Hou.
\newblock A mixed multiscale finite element method for elliptic problems with oscillating coefficients.
\newblock {\em Mathematics of Computation}, 72(242):541--576, 2003.

\bibitem{Chung2016}
Eric~T. Chung, Yalchin Efendiev, and Thomas~Y. Hou.
\newblock Adaptive multiscale model reduction with generalized multiscale finite element methods.
\newblock {\em Journal of Computational Physics}, 320:69--95, 2016.

\bibitem{Chung2014}
Eric~T. Chung, Yalchin Efendiev, and Wing~Tat Leung.
\newblock Generalized multiscale finite element methods for wave propagation in heterogeneous media.
\newblock {\em Multiscale Modeling \& Simulation}, 12(4):1691--1721, 2014.

\bibitem{Chung2018}
Eric~T. Chung, Yalchin Efendiev, and Wing~Tat Leung.
\newblock Constraint energy minimizing generalized multiscale finite element method.
\newblock {\em Computer Methods in Applied Mechanics and Engineering}, 339:298--319, September 2018.

\bibitem{Drouet2015}
Guillaume Drouet and Patrick Hild.
\newblock Optimal convergence for discrete variational inequalities modelling {Signorini} contact in {2D} and {3D} without additional assumptions on the unknown contact set.
\newblock {\em SIAM Journal on Numerical Analysis}, 53(3):1488--1507, January 2015.

\bibitem{Duvaut1976}
G.~Duvaut and J.-L. Lions.
\newblock {\em Inequalities in mechanics and physics}, volume 219 of {\em Grundlehren der Mathematischen Wissenschaften}.
\newblock Springer-Verlag, Berlin-New York, 1976.
\newblock Translated from the French by C. W. John.

\bibitem{Weinan2007}
Weinan E, Bjorn Engquist, Xiantao Li, Weiqing Ren, and Eric Vanden-Eijnden.
\newblock Heterogeneous multiscale methods: a review.
\newblock {\em Communications in computational physics}, 2(3):367--450, 2007.

\bibitem{Eck2005}
Christof Eck, Ji\v{r}\'{i} Jaru\v{s}ek, and Miroslav Krbec.
\newblock {\em Unilateral contact problems variational methods and existence theorems}.
\newblock Taylor \& Francis Group, FL, 2005.

\bibitem{Efendiev2013}
Yalchin Efendiev, Juan Galvis, and Thomas~Y. Hou.
\newblock Generalized multiscale finite element methods {(GMsFEM)}.
\newblock {\em Journal of computational physics}, 251:116--135, 2013.

\bibitem{Efendiev2009}
Yalchin Efendiev and Thomas~Y. Hou.
\newblock {\em Multiscale finite element methods: theory and applications}, volume~4.
\newblock Springer Science \& Business Media, 2009.

\bibitem{Hild2012}
Patrick Hild and Yves Renard.
\newblock An improved a priori error analysis for finite element approximations of {Signorini}’s problem.
\newblock {\em SIAM Journal on Numerical Analysis}, 50(5):2400--2419, January 2012.

\bibitem{Hintermueller2002}
M.~Hinterm{\"u}ller, K.~Ito, and K.~Kunisch.
\newblock The {{Primal-Dual active set strategy}} as a {{Semismooth Newton method}}.
\newblock {\em SIAM Journal on Optimization}, 13(3):865--888, January 2002.

\bibitem{Hintermueller2010}
Michael Hinterm{\"u}ller.
\newblock Semismooth {N}ewton methods and applications.
\newblock {\em Department of Mathematics, Humboldt-University of Berlin}, 2010.

\bibitem{Hou1997}
Thomas~Y. Hou and Xiaohui Wu.
\newblock A multiscale finite element method for elliptic problems in composite materials and porous media.
\newblock {\em Journal of Computational Physics}, 134:169--189, June 1997.

\bibitem{Hou1999}
Thomas~Y. Hou, Xiaohui Wu, and Zhiqiang Cai.
\newblock Convergence of a multiscale finite element method for elliptic problems with rapidly oscillating coefficients.
\newblock {\em Mathematics of computation}, 68(227):913--943, 1999.

\bibitem{Hueeber2008}
S.~H\"{u}eber, G.~Stadler, and B.~I. Wohlmuth.
\newblock A primal-dual active set algorithm for three-dimensional contact problems with {Coulomb} friction.
\newblock {\em SIAM Journal on Scientific Computing}, 30(2):572--596, January 2008.

\bibitem{Kikuchi1988}
Noboru Kikuchi and John~Tinsley Oden.
\newblock {\em Contact problems in elasticity: a study of variational inequalities and finite element methods}.
\newblock SIAM, 1988.

\bibitem{Li2025}
Zishang Li, Changqing Ye, and Eric~T. Chung.
\newblock An iterative constraint energy minimizing generalized multiscale finite element method for contact problem.
\newblock {\em Journal of Computational and Applied Mathematics}, 461:116398, June 2025.

\bibitem{Lions1967}
J.-L. Lions and G.~Stampacchia.
\newblock Variational inequalities.
\newblock {\em Communications on Pure and Applied Mathematics}, 20(3):493--519, 1967.

\bibitem{Ming2005}
Pingbing Ming and Pingwen Zhang.
\newblock Analysis of the heterogeneous multiscale method for elliptic homogenization problems.
\newblock {\em Journal of the American Mathematical Society}, 18(1):121--156, 2005.

\bibitem{Wohlmuth2011}
Barbara Wohlmuth.
\newblock Variationally consistent discretization schemes and numerical algorithms for contact problems.
\newblock {\em Acta Numerica}, 20:569--734, May 2011.

\bibitem{Wohlmuth2000}
Barbara~I. Wohlmuth.
\newblock A mortar finite element method using dual spaces for the {Lagrange} multiplier.
\newblock {\em SIAM Journal on Numerical Analysis}, 38(3):989--1012, January 2000.

\bibitem{Wriggers2006}
Peter Wriggers and Tod~A. Laursen.
\newblock {\em Computational contact mechanics}, volume~2.
\newblock Springer, 2006.

\bibitem{Wu2022}
Botao Wu, Zhendong Wang, and Huamin Wang.
\newblock A {GPU}-based multilevel additive {Schwarz} preconditioner for cloth and deformable body simulation.
\newblock {\em ACM Transactions on Graphics}, 41(4):1--14, July 2022.

\bibitem{Ye2023}
Changqing Ye and Eric~T. Chung.
\newblock Constraint energy minimizing generalized multiscale finite element method for inhomogeneous boundary value problems with high contrast coefficients.
\newblock {\em Multiscale Modeling \& Simulation}, 21(1):194--217, March 2023.
\newblock Publisher: Society for Industrial and Applied Mathematics.

\bibitem{Ye2023a}
Changqing Ye, Eric~T. Chung, and Jun-zhi Cui.
\newblock Homogenization with the quasistatic {Tresca} friction law: Qualitative and auantitative results.
\newblock {\em Chinese Annals of Mathematics, Series B}, 44(5):781--802, September 2023.

\bibitem{Ye2021a}
Changqing Ye, Junzhi Cui, and Hao Dong.
\newblock Asymptotic analysis of nonlinear {Robin}-type boundary value problems with small periodic structure.
\newblock {\em Multiscale Modeling \& Simulation}, 19(2):830--845, January 2021.

\end{thebibliography}
\end{document}